\documentclass[12pt]{article}
\usepackage{amssymb,amsmath,amsthm,mathrsfs}
\usepackage[usenames]{color}
\usepackage{hyperref}
\usepackage[all]{xy}
\usepackage{xypic}
\usepackage{graphicx}
\usepackage{amscd}

\oddsidemargin=0cm
\evensidemargin=0cm

\baselineskip 18pt \textwidth 16cm \sloppy \theoremstyle{plain}

\newtheorem{theorem}{Theorem}[section]
\newtheorem{proposition}[theorem]{Proposition}
\newtheorem{corollary}[theorem]{Corollary}
\newtheorem{lemma}[theorem]{Lemma}
\newtheorem{definition}[theorem]{Definition}

\newtheorem{example}[theorem]{Example}
\newtheorem{remark}[theorem]{Remark}

\newcommand{\Z}{\mathbb Z}

\newcommand{\RamiAlt}[1]{}
\newcommand{\RamiAbout}[1]{}

\DeclareMathOperator{\GL}{GL}

\DeclareMathOperator{\tr}{tr}
\DeclareMathOperator{\Aut}{Aut}
\DeclareMathOperator{\Hom}{Hom}

\DeclareMathOperator{\Bor}{Bor}
\DeclareMathOperator{\Vect}{Vect}

% Local definitions
\DeclareMathOperator{\Irr}{Irr}

%\newcommand{\pg}{multygraph}

%%%%%%%%%%%%%%%%%%%%%%%%%%%%%%%%%%%%%%%%%%%%Rami added
\usepackage[usenames,dvipsnames]{xcolor}
%\usepackage{showkeys}

%A blue link (color #0645AD = rgb(6,69,173) =       ) is a link to a page that currently exists within Wikipedia.
%A dark blue link (#0B0080 = rgb(11,0,128) =       ) is a link to a page that has been visited already.
\definecolor{dblue}{RGB}{6,69,173}
\definecolor{lblue}{RGB}{11,0,128}

\newcommand{\colorlinks}{true}

\newcommand{\linkcolor}{lblue}
\newcommand{\citecolor}{green}
\newcommand{\urlcolor}{dblue}

\newcommand{\linkbordercolor}{red}
\newcommand{\citebordercolor}{green}
\newcommand{\urlbordercolor}{cyan}

\usepackage{hyperref}
\hypersetup{colorlinks=\colorlinks,linkbordercolor=\linkbordercolor, linkcolor=\linkcolor, citecolor=\citecolor, urlcolor=\urlcolor,urlbordercolor=\urlbordercolor,citebordercolor=\citebordercolor}%

%%%%%%%%%%%%%%%%%%%%%%%%%%%%%%%%%%%%%%%%%%%%%%%%%%%%%%%%
%hyper Makros
%%%%%%%%%%%%%%%%%%%%%%%%%%%%%%%%%%%%%%%%%%%%%%%%%%%%%%%%

\newcommand{\hrefHid}[2]{
\hypersetup{urlbordercolor={1 1 1}}%
\hypersetup{urlcolor=black}%
\href{#1}{#2}%
\hypersetup{urlbordercolor=\urlbordercolor}%
\hypersetup{urlcolor=\urlcolor}%
}

\newcommand{\inhref}[2]{\hyperref[#1]{#2}}

\newcommand{\inhrefHid}[2]{%
\hypersetup{linkbordercolor={1 1 1}}%
\hypersetup{linkcolor=black}%
\inhref{#1}{#2}%
\hypersetup{linkbordercolor=\linkbordercolor}%
\hypersetup{linkcolor=\linkcolor}%
}

\newcommand{\defHref}[3]{\newcommand{#1}[1][#3]{\href{#2}{##1}}}
\newcommand{\defInhref}[3]{\newcommand{#1}[1][#3]{\inhref{#2}{##1}}}
\newcommand{\defHrefHid}[3]{\newcommand{#1}[1][#3]{\hrefHid{#2}{##1}}}
\newcommand{\defInhrefHid}[3]{\newcommand{#1}[1][#3]{\inhrefHid{#2}{##1}}}

\newcommand{\defHrefBoth}[3]{%
\expandafter\defHrefHid \csname #3Hid\endcsname {#1}{#2}%
\expandafter\defHref \csname #3Vis\endcsname {#1}{#2}%
}
\newcommand{\defInhrefBoth}[3]{%
  \expandafter\defInhrefHid \csname #3Hid\endcsname {#1}{#2}%
  \expandafter\defInhref \csname #3Vis\endcsname {#1}{#2}%
}

\newcommand{\defHrefBothVis}[3]{%
\defHrefBoth{#1}{#2}{#3}%
\expandafter\defHref \csname #3\endcsname {#1}{#2}%
}
\newcommand{\defInhrefBothVis}[3]{%
  \defInhrefBoth{#1}{#2}{#3}%
  \expandafter\defInhref \csname #3\endcsname {#1}{#2}%
}

\newcommand{\defHrefBothHid}[3]{%
\defHrefBoth{#1}{#2}{#3}%
\expandafter\defHrefHid \csname #3\endcsname {#1}{#2}%
}
\newcommand{\defInhrefBothHid}[3]{%
  \defInhrefBoth{#1}{#2}{#3}%
  \expandafter\defInhrefHid \csname #3\endcsname {#1}{#2}%
}

\oddsidemargin=0cm
\evensidemargin=0cm
\baselineskip 18pt \textwidth 16cm \sloppy \theoremstyle{plain}

\newtheorem*{theorem*}{Theorem}
\newtheorem*{remark*}{Remark}
\newtheorem*{conjecture*}{Conjecture}

\newtheorem{introtheorem}{Theorem}

\newcommand{\C}{\mathbb C}

\renewcommand{\dim}{{\operatorname{dim}}}

\renewcommand{\Hom}{{\operatorname{Hom}}}

\newcommand{\irr}{\operatorname{Irr}}

%%%%%%%%%%%%%%%%%%%%%%%%%%%%%%%%%%%%%%%%%%%%Rami added end

\usepackage{varioref}

\title{Relative Frobenius Formula}
\author{Avraham Aizenbud, Nir Avni and Yoav Krauz}
\begin{document}

\maketitle
\begin{abstract}
{For a finite group $G$, Frobenius found a formula for the values of the function $\sum_{\Irr G} (\dim\, \pi)^{-s}$ for even integers $s$, where $\Irr G$ is the set of irreducible representations of $G$. We generalize this formula to the relative case: for a subgroup $H$, we find a formula for the values of the function $\sum_{\Irr G} (\dim\, \pi)^{-s} (\dim\, \pi ^H)^{-t}$. We apply our results to compute the E-polynomials of Fock--Goncharov spaces and to relate the Gelfand property to the geometry of generalized Fock--Goncharov spaces.}
\end{abstract}

\tableofcontents

\section{Frobenius' formula}

Let $S$ be a compact surface and let $G$ be a finite group. A {fundamental} formula of Frobenius relates the number of homomorphisms from the fundamental group of $S$ to $G$ and the dimensions of the {irreducible representations} of $G$:

\begin{theorem}\label{thm:frob} Let $S$ be a compact surface of genus $k$ and let $G$ be a finite group. Then,
\[
|G|^{2k-1}\sum_{\pi \in \Irr\, G} (\dim\, \pi )^{2-2k}=|\Hom(\pi_1(S),G)|=| \left\{ (x_1,y_1,\ldots,x_k,y_k)\in G^{2k} \mid [x_1,y_1]\cdots[x_k,y_k]=1 \right\} |,
\]
where $\Irr G$ is the set of (isomorphism classes of) irreducible representations of $G$.
\end{theorem}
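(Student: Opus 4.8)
The second equality is the standard presentation of the fundamental group of a closed orientable surface of genus $k$: one has $\pi_1(S)=\langle a_1,b_1,\ldots,a_k,b_k\mid [a_1,b_1]\cdots[a_k,b_k]=1\rangle$, so a homomorphism $\pi_1(S)\to G$ is the same datum as a tuple $(x_1,y_1,\ldots,x_k,y_k)\in G^{2k}$ satisfying $[x_1,y_1]\cdots[x_k,y_k]=1$. Thus it remains to prove the first equality, and the plan is to do this by character theory, following Frobenius.

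The engine is Schur's lemma. For an irreducible representation $\rho$ of $G$ with character $\chi$, the operator $\sum_{x\in G}\rho(xgx^{-1})$ commutes with $\rho(G)$, hence is a scalar; taking traces identifies it as $\tfrac{|G|}{\chi(1)}\chi(g)\cdot\Id$. Feeding this into the commutator and summing over the second variable, one gets $\sum_{x,y\in G}\rho([x,y])=\tfrac{|G|}{\chi(1)}\sum_{y}\chi(y)\rho(y^{-1})$, and $\sum_y\chi(y)\rho(y^{-1})$ is again scalar by Schur (its trace is $\sum_y|\chi(y)|^2=|G|$), equal to $\tfrac{|G|}{\chi(1)}\Id$. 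Hence $\sum_{x,y\in G}\rho([x,y])=\tfrac{|G|^2}{\chi(1)^2}\Id$, a scalar operator. Since a scalar operator commutes with everything, multiplying on the right by $\rho(g')$ and inducting on $k$ yields
\[
\sum_{(x_1,y_1,\ldots,x_k,y_k)\in G^{2k}}\rho\bigl([x_1,y_1]\cdots[x_k,y_k]\bigr)=\left(\frac{|G|^2}{\chi(1)^2}\right)^{k}\Id,
\]
so taking traces gives $\sum_{(x_i,y_i)}\chi\bigl(\prod_i[x_i,y_i]\bigr)=|G|^{2k}\chi(1)^{1-2k}$.

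To extract the count, apply column orthogonality in the form $\mathbf{1}[z=1]=\tfrac{1}{|G|}\sum_{\pi\in\Irr G}\dim(\pi)\,\overline{\chi_\pi(z)}$. Summing over all tuples,
\[
|\Hom(\pi_1(S),G)|=\sum_{(x_i,y_i)}\mathbf{1}\Bigl[\textstyle\prod_i[x_i,y_i]=1\Bigr]=\frac{1}{|G|}\sum_{\pi\in\Irr G}\dim(\pi)\cdot\frac{|G|^{2k}}{\dim(\pi)^{2k-1}}=|G|^{2k-1}\sum_{\pi\in\Irr G}\dim(\pi)^{2-2k},
\]
using $\chi_\pi(1)=\dim\pi$ and that the relevant sums are real because $\pi\mapsto\bar\pi$ permutes $\Irr G$. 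This is the claimed identity.

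There is no serious obstacle here — the result is classical — but the step requiring the most care is the double application of Schur's lemma establishing that $\sum_{x,y\in G}\rho([x,y])$ is the scalar $|G|^2/\chi(1)^2$; the rest is bookkeeping of normalizations and the $\chi$ versus $\bar\chi$ conventions. I would present that scalarity computation in full and treat the induction on $k$ and the final orthogonality manipulation as routine.
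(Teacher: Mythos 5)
Your proof is correct, and it is the classical route to the Frobenius formula; the paper takes a genuinely different one. You prove directly, via a double application of Schur's lemma, that $\sum_{x,y\in G}\rho([x,y])$ is the scalar operator $\bigl(|G|/\dim\rho\bigr)^2\Id$, induct on $k$, and finish with column orthogonality $\mathbf{1}[z=1]=\frac{1}{|G|}\sum_\pi\dim(\pi)\overline{\chi_\pi(z)}$. The paper instead derives Theorem \ref{thm:frob} as a by-product of the relative machinery it is building: it first records Lemma \ref{lem:key}, namely that for any $G$-set $X$ one has $\sum_\pi\dim\Hom_G(\pi,\C[X])\,\chi_\pi(g)=\#X^g$ (just the Burnside/fixed-point count for the character of $\C[X]$), then specializes to the $G\times G$-action on $X=G$, where $\C[G]\cong\bigoplus_\pi\pi\otimes\pi^*$ and the spherical character $\chi^G_{\pi\otimes\pi^*}(1,g)=\frac{1}{\#G\,\dim\pi}\chi_\pi(g)$ (Lemma \ref{lem:sp.char}) turns the fixed-point count into the $k=1$ commutator count, and finally handles general $k$ by convolving characters via Lemma \ref{lem:conv}. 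Your approach is more self-contained and direct; the paper's is deliberately indirect so that the classical statement appears as a special case of the relative Frobenius formula (the $G\times G$-action on $G$), which is the conceptual point the paper wants to make. Note also that the convolution identity $\chi_\pi*\chi_\tau=\frac{\delta_{\pi,\tau}|G|}{\dim\pi}\chi_\pi$ the paper uses to induct is exactly the same Schur-lemma computation that underlies your scalarity argument, so the two proofs are ultimately using the same orthogonality input, packaged differently. One small stylistic remark: the reality claim you invoke at the end is unnecessary, since $\sum_{(x_i,y_i)}\chi_\pi\bigl(\prod_i[x_i,y_i]\bigr)=|G|^{2k}\dim(\pi)^{1-2k}$ is already manifestly real, so the bar in the orthogonality relation can be dropped without appealing to the involution $\pi\mapsto\bar\pi$.
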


For example, $k=0$ gives $\sum_{\pi \in \Irr\, G}(\dim\, \pi)^2=|G|$, whereas from $k=1$ we get
\[
|\Irr\, G|=\frac{1}{|G|}\cdot| \left\{ (x,y)\in G^2 \mid xy=yx \right\} |=\sum_{x\in G} \frac{|C_G(x)|}{|G|}=\sum_{x\in G} \frac{1}{|x^G|}=|G//G|.
\]
{Theorem \ref{thm:frob} also has versions for compact Lie groups and for pro-finite groups (see \cite{Wi,AA}).}

Theorem \ref{thm:frob} is the case $g=1$ of the following theorem:

\begin{theorem}\label{thm:frob.g} Let $G$ be a finite group and let $g \in G$. Then,
\[|G|^{2k-1}\sum_{\pi \in \Irr\, G} (\dim\, \pi )^{1-2k}\chi_\pi(g)=
| \left\{ (x_1,y_1,\ldots,x_k,y_k)\in G^{2k} \mid [x_1,y_1]\cdots[x_k,y_k]=g \right\}|.
\]
\end{theorem}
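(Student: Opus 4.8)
The plan is to prove Theorem~\ref{thm:frob.g} by the standard Fourier-analytic method on the finite group $G$, using the convolution structure on the space of class functions. First I would recall the key tool: for each irreducible character $\chi_\pi$, the normalized function $\frac{|G|}{\dim\pi}\chi_\pi$ is idempotent under convolution, and more precisely, if $f_1,\dots,f_n$ are class functions, then the convolution $(f_1 * \cdots * f_n)(g)$ expands in the basis $\{\chi_\pi\}$ with coefficients built from the inner products $\langle f_i,\chi_\pi\rangle$, weighted by appropriate powers of $\dim\pi$. The specific input I need is the commutator-convolution identity: if we define $\beta\colon G\times G\to \mathbb{Z}$ as the indicator-style pushforward of the commutator map, i.e. $\beta(g)=|\{(x,y)\in G^2 \mid [x,y]=g\}|$ viewed as a class function, then
\[
\beta = |G|\sum_{\pi\in\Irr G}\frac{1}{\dim\pi}\chi_\pi.
\]
This is the genus-one building block, and it follows from the orthogonality relations together with the classical formula $\sum_{x\in G}\chi_\pi(xgx^{-1}h)=\frac{|G|}{\dim\pi}\chi_\pi(g)\chi_\pi(h)$ (the ``Frobenius'' column-sum identity), applied with $h=g'^{-1}$ and integrated.

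Next I would set up the genus-$k$ count as an iterated convolution. The number $N_k(g):=|\{(x_1,y_1,\dots,x_k,y_k)\in G^{2k} \mid [x_1,y_1]\cdots[x_k,y_k]=g\}|$ is exactly the $k$-fold convolution $(\beta * \beta * \cdots * \beta)(g)$, since convolution of the fiber-counting functions corresponds to composing the product relations. Now substitute the spectral expansion of $\beta$ and use that convolution is diagonalized by the characters: for class functions, $\chi_\pi * \chi_\rho = \delta_{\pi,\rho}\frac{|G|}{\dim\pi}\chi_\pi$. Therefore
\[
N_k(g) = \bigl(\beta^{*k}\bigr)(g) = |G|^k \sum_{\pi\in\Irr G}\left(\frac{1}{\dim\pi}\right)^k \left(\frac{|G|}{\dim\pi}\right)^{k-1}\chi_\pi(g) = |G|^{2k-1}\sum_{\pi\in\Irr G}(\dim\pi)^{1-2k}\chi_\pi(g),
\]
which is precisely the claimed formula. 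Setting $g=1$ and using $\chi_\pi(1)=\dim\pi$ recovers Theorem~\ref{thm:frob}, consistent with the stated specialization.

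The main obstacle — really the only nontrivial point — is establishing the genus-one identity $\beta = |G|\sum_\pi (\dim\pi)^{-1}\chi_\pi$ cleanly, i.e. correctly tracking the normalization constants in the column-sum/orthogonality manipulation. I would handle this by writing $\langle \beta, \chi_\pi\rangle = \frac{1}{|G|}\sum_{x,y}\chi_\pi([x,y])$ and evaluating the inner double sum: fixing $x$, the sum over $y$ of $\chi_\pi(xyx^{-1}y^{-1})$ equals $\frac{|G|}{\dim\pi}\chi_\pi(x)\chi_\pi(x^{-1})=\frac{|G|}{\dim\pi}|\chi_\pi(x)|^2$ by the Frobenius column-sum identity, and then summing over $x$ and using $\langle\chi_\pi,\chi_\pi\rangle=1$ gives $\langle\beta,\chi_\pi\rangle = \frac{|G|}{\dim\pi}$; since $\beta$ is a class function this yields the expansion. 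Once the constants are verified, the iterated-convolution step is purely formal. A secondary bookkeeping point is checking that the convolution normalization I use (whether $(f*h)(g)=\sum_{ab=g}f(a)h(b)$ versus an averaged version) matches the fiber-counting interpretation of $N_k$; I would fix the unaveraged convention throughout so that $\beta^{*k}$ literally counts tuples, and absorb all $|G|$ factors through the eigenvalue $|G|/\dim\pi$ of $\chi_\pi$ under convolution.
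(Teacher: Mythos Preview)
Your proof is correct, and its overall architecture---establish the genus-one identity $\beta=|G|\sum_\pi(\dim\pi)^{-1}\chi_\pi$ and then take a $k$-fold convolution using $\chi_\pi*\chi_\rho=\delta_{\pi,\rho}\frac{|G|}{\dim\pi}\chi_\pi$ (the paper's Lemma~\ref{lem:conv})---is exactly the strategy the paper uses. In the main text the paper treats Theorem~\ref{thm:frob.g} as classical and gives no standalone proof; the closest is the appendix's proof of Theorem~\ref{thm:frob}, which likewise obtains the $k=1$ case and then iterates by convolution.

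The only real difference is in how the $k=1$ building block is derived. You compute $\langle\beta,\chi_\pi\rangle$ directly via the Schur-lemma identity $\sum_y\chi_\pi(xyx^{-1}y^{-1})=\frac{|G|}{\dim\pi}|\chi_\pi(x)|^2$. The paper instead recovers the same identity through its relative framework: it applies Lemma~\ref{lem:key} to the two-sided $G\times G$-action on $G$ and uses Lemma~\ref{lem:sp.char} (the computation of $\chi_{\pi\otimes\pi^*}^G(1,g)$) to translate the result into $\sum_\pi\frac{|G|}{\dim\pi}\chi_\pi(g)=\#\{(x,y):[x,y]=g\}$. Your route is the more elementary, textbook one; the paper's route illustrates that the classical Frobenius formula is itself a special case of the relative machinery being developed. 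Mathematically the two derivations are equivalent and of comparable length.
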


In this paper, we generalize Frobenius' formula to the relative case, i.e., we replace the representation theory of a group $G$ by the harmonic analysis on some $G$-space $X$. We apply our result for Gelfand pairs and the Hodge theory of Fock--Goncharov spaces.

\section{Relative representation theory}
Relative representation theory is motivated by the following example:
{
\begin{example}
Let $H$ be a (finite) group, and consider $H$ as a $H \times H$-set via the action $$(h_1,h_2) \cdot h:= h_1 h h_2^{-1}.$$ Consider the space $\C[H]$ of complex-valued functions on $H$ as a  representation of $H \times H$. We have $$\C[H]=\bigoplus_{\pi \in \Irr H} \pi \otimes \pi^*.$$
\end{example}
This example shows that understanding the $H \times H$-representation $\C[H]$ ``is the same'' as understanding the representation theory of $H$. One can reformulate many concepts of the representation theory of $H$ in terms of the $H \times H$-representation $\mathbb{C}[H]$. Relative representation theory (also known as abstract harmonic analysis) deals with those concepts considered in a wider generality: a group $G$ acting on a set $X$ and the representation of $G$ on $\C[X]$.}

%Often one stady the case when $X=G/K$ for some $K<G$, and often one pose %some restriction on $K$, for example to be a symetric subgroup (i.e. the %set of fix points of an involution).

Two important examples of representation theoretical concepts that have relative counterparts are Schur's Lemma, whose relative counterpart is the Gelfand property (see Definition \ref{def:Gelfand} below) and the notion of a character, whose relative  counterpart is the notion of spherical (or relative) character (see Definition \ref{def:sph.char} below).

\section{Relative version of Frobenius' formula}
We prove the following theorem in \S\ref{sec:pf}:
\begin{theorem}\label{thm:main}
Let $G$ be a finite group acting on a finite set $X$, {let $g\in G$, and let $k\in \mathbb{Z}_{\geq 0}$, $m\in \mathbb{Z}_{\geq 1}$.} Then:
\begin{multline*}
\sum_{\pi \in irr G}\frac{\dim(\Hom_G(\pi, \C[X]))^m}{\dim \pi^{m+2k-1}} \chi_\pi(g)= \frac{1}{\#G^{m+2k-1}} \cdot\\\cdot \#\{p_1,\dots p_m \in X,h_1,\dots h_m,a_1,\dots a_k,b_1,\dots b_k \in G|h_i\in G_{p_i},\prod_{i=1}^m h_i \cdot \prod_{i=1}^k [a_i, b_i]=g\}=\\
=\frac{1}{\#G^{m+2k-1}}\sum_{h_2,\dots h_m,a_1,\dots a_k,b_1,\dots b_k \in G} \#X^{g^{-1}\cdot h_2 \cdots h_m  \cdot  [a_1, b_1] \cdots  [a_k, b_k] }\prod_{i=2}^m \#X^{h_i},
%=\frac{1}{\#G^{m+2k-1}}\sum_{h_2,\dots h_m,a_1,\dots a_k,b_1,\dots b_k \in G} \#X^{g^{-1}\prod_{i=2}^m h_i \cdot \prod_{i=1}^k [a_i, b_i]}\prod_{i=2}^m \#X^{h_i}
\end{multline*}
%$\chi_\pi$ is the character (i.e. trace) of a representation $\pi$ and
where
$[a,b]:=aba^{-1}b^{-1}$ is the commutator of $a$ and $b.$
\end{theorem}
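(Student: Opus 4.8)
The plan is to prove the theorem by manipulating the character-theoretic identity $\sum_{\pi\in\Irr G}\dim\pi\cdot\chi_\pi(g)=\#G\cdot\mathbf{1}_{g=1}$ together with the standard orthogonality/convolution formulas, and to reduce the statement to Theorem~\ref{thm:frob.g}. The key observation is that $\dim\Hom_G(\pi,\C[X])=\langle \chi_\pi,\mathbf{1}_X^G\rangle$, where $\mathbf{1}_X^G$ is the permutation character of $X$, i.e.\ $\mathbf{1}_X^G(h)=\#X^h$ (the number of fixed points of $h$ on $X$); equivalently $\dim\Hom_G(\pi,\C[X])=\frac{1}{\#G}\sum_{h\in G}\chi_\pi(h^{-1})\#X^h=\frac{1}{\#G}\sum_{p\in X,\,h\in G_p}\chi_\pi(h^{-1})$. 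So each of the $m$ factors $\dim\Hom_G(\pi,\C[X])$ in the numerator contributes one summation over a point $p_i\in X$ and one over a stabilizer element $h_i\in G_{p_i}$ (using that $\chi_\pi(h)=\overline{\chi_\pi(h^{-1})}$ and that $\pi$ may be taken self-dual-averaged, or just carrying the $h_i^{-1}$ and renaming).

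First I would expand all $m$ factors of $\dim\Hom_G(\pi,\C[X])$ as above, obtaining
\[
\sum_{\pi}\frac{(\dim\Hom_G(\pi,\C[X]))^m}{\dim\pi^{\,m+2k-1}}\chi_\pi(g)
=\frac{1}{\#G^m}\sum_{\substack{p_1,\dots,p_m\in X\\ h_i\in G_{p_i}}}\ \sum_{\pi}\frac{\chi_\pi(g)\prod_{i=1}^m\chi_\pi(h_i^{-1})}{\dim\pi^{\,m+2k-1}}.
\]
Then, for fixed $h_1,\dots,h_m$, I would evaluate the inner sum over $\pi$. The idea is to iteratively apply the class-function convolution identity
\[
\frac{\chi_\pi(u)\chi_\pi(v)}{\dim\pi}=\frac{1}{\#G}\sum_{z\in G}\chi_\pi(uzvz^{-1}),
\]
which follows from Schur orthogonality, to collapse the product $\chi_\pi(g)\chi_\pi(h_1^{-1})\cdots\chi_\pi(h_m^{-1})$ into a single character value at the cost of $m-1$ extra conjugation variables $z_2,\dots,z_m\in G$ and $m-1$ factors of $\dim\pi$ in the denominator. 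This leaves $\sum_\pi \chi_\pi\!\big(g\cdot h_1^{-1}\cdot(z_2h_2^{-1}z_2^{-1})\cdots\big)/\dim\pi^{\,2k}$, which is exactly the shape to which Theorem~\ref{thm:frob.g} applies (the genus-$k$ commutator count); alternatively one applies Theorem~\ref{thm:frob.g} directly to the element $\prod_i h_i^{-1}\cdot g^{-1}$ after absorbing the conjugations. Because each $h_i$ ranges over $G_{p_i}$, which depends on $p_i$, one must sum over $p_i$ with $h_i\in G_{p_i}$; since $\sum_{p\in X}\mathbf{1}_{h\in G_p}=\#X^h$, the sums over the $p_i$ turn $\prod_i$-many stabilizer constraints into $\prod_{i}\#X^{h_i}$, and a change of variables lets one single out one distinguished factor $\#X^{g^{-1}h_2\cdots h_m[a_1,b_1]\cdots[a_k,b_k]}$ as in the last line of the statement.

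After the inner $\pi$-sum is evaluated via Theorem~\ref{thm:frob.g}, the right-hand side becomes a count of tuples $(a_i,b_i)$ and conjugation variables satisfying a commutator-product equation, and I would then repackage the data: the $k$ pairs $(a_i,b_i)$ give the genus-$k$ part, the $h_i\in G_{p_i}$ give the "boundary with marked points" part, and the constraint reads $\prod_i h_i\cdot\prod_j[a_j,b_j]=g$ after renaming. Carefully tracking the powers of $\#G$ — $\#G^m$ from expanding the $\Hom$ factors, $\#G^{m-1}$ from the convolution steps, and $\#G^{2k-1}$ from Theorem~\ref{thm:frob.g}, against the $\#G^{m+2k-1}$ in the target — should match, and the final rewriting using $\#X^h=\sum_{p\in X}\mathbf{1}_{h\in G_p}$ gives the third expression. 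The main obstacle is bookkeeping: getting the conjugation variables, the direction of inverses on the $h_i$ and on $g$, and the exact exponent of $\#G$ to line up so that the three displayed expressions agree on the nose; conceptually nothing beyond Schur orthogonality and Theorem~\ref{thm:frob.g} is needed, but the combinatorial reduction must be done with care, ideally by first treating $k=0$ (where only the convolution identity is used and the RHS is a pure fixed-point count) and then grafting on the genus-$k$ piece.
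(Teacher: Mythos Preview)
Your proposal is correct and uses exactly the same ingredients as the paper: the permutation-character identity $\sum_\pi \dim\Hom_G(\pi,\C[X])\,\chi_\pi(g)=\#X^g$ (Lemma~\ref{lem:key}), the character-convolution identity (Lemma~\ref{lem:conv}, of which your formula $\chi_\pi(u)\chi_\pi(v)/\dim\pi=\frac{1}{\#G}\sum_z\chi_\pi(uzvz^{-1})$ is an equivalent reformulation), and Theorem~\ref{thm:frob.g} for the genus-$k$ piece. The paper packages the argument more economically: instead of expanding each factor $\dim\Hom_G(\pi,\C[X])$ and introducing auxiliary conjugation variables $z_i$ that must later be absorbed, it simply convolves the class-function identity \eqref{eq:key} with itself $m$ times and once with the identity of Theorem~\ref{thm:frob.g}, so the right-hand sides convolve to the desired count directly and the bookkeeping you flag (inverses, powers of $\#G$, the spurious $z_i$) never arises.
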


In Appendix \ref{sec:sp.char} we reformulate this theorem in terms of spherical characters.

\section{A criterion for  Gelfand pairs}
Recall the definition of Gelfand pairs:
\begin{definition} \label{def:Gelfand}
Let $G$ be a finite group.
\begin{enumerate}
\item Assume that $G$ acts on a finite set $X$. We say that $X$ is multiplicity free if, for any $\pi \in \irr(G)$, we have
$\dim \Hom_G(\pi,\C[X])\leq 1$.
\item Let $H <G$. We say that $(G,H)$ is a Gelfand pair if $G/H$ is a multiplicity free $G$-set.
\end{enumerate}
\end{definition}
Theorem \ref{thm:main} gives us the following criterion for  Gelfand pairs:
%\ \nir{add more equivalent conditions}
\begin{corollary}
Let $H \subset G$ be a pair of groups, and let $X=G/H$.
Then the pair  $(G,H)$ is a Gelfand pair if and only if
$$\sum_{g,h\in G} \#X^{[g,h]}= \sum_{g,h\in G} \#X^g \cdot \#X^h  \cdot  \#X^{gh}.$$
\end{corollary}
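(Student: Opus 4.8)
The plan is to derive the corollary directly from Theorem \ref{thm:main} by specializing the parameters appropriately and analyzing when an inequality between nonnegative terms becomes an equality. First I would apply Theorem \ref{thm:main} with $g = 1$, $k = 1$, and $m = 1$. With these choices the left-hand side becomes $\sum_{\pi \in \irr G} \frac{\dim(\Hom_G(\pi, \C[X]))}{\dim \pi^{2}} \cdot \dim \pi = \sum_{\pi} \frac{\dim \Hom_G(\pi, \C[X])}{\dim \pi}$, wait — I should instead take $m=2$, $k=0$, $g=1$ to land on the square of the multiplicity. With $m=2$, $k=0$, $g=1$ the exponent $m+2k-1 = 1$, so the left side is $\sum_{\pi \in \irr G} \frac{\dim(\Hom_G(\pi,\C[X]))^2}{\dim \pi} \cdot \dim \pi = \sum_{\pi} \dim(\Hom_G(\pi,\C[X]))^2$, and the right side (using the last displayed form of the theorem) becomes $\frac{1}{\#G} \sum_{h_2 \in G} \#X^{h_2} \cdot \#X^{h_2}$ — hmm, that is not quite the claimed formula either. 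Let me instead use $m = 1$, $k = 1$, $g = 1$: then $m + 2k - 1 = 2$, the left side is $\sum_{\pi} \frac{\dim \Hom_G(\pi,\C[X])}{\dim \pi^{2}} \cdot \dim \pi \cdot 1$; this still is not a sum of squares. The correct specialization giving the corollary's right-hand side $\sum_{g,h} \#X^g \#X^h \#X^{gh}$ is $m = 3$, $k = 0$, $g = 1$: then $m + 2k - 1 = 2$, the left side is $\sum_{\pi \in \irr G} \frac{\dim(\Hom_G(\pi,\C[X]))^3}{\dim \pi^2} \cdot \dim \pi = \sum_{\pi} \frac{\dim(\Hom_G(\pi,\C[X]))^3}{\dim \pi}$, and the right side is $\frac{1}{\#G^2} \sum_{h_2,h_3 \in G} \#X^{h_2 h_3} \cdot \#X^{h_2} \cdot \#X^{h_3}$; reindexing $g = h_2$, $h = h_3$ gives exactly $\frac{1}{\#G^2}\sum_{g,h} \#X^g \#X^h \#X^{gh}$. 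Similarly, applying the theorem with $m = 1$, $k = 1$, $g = 1$ gives $m+2k-1 = 2$ and the left side $\sum_{\pi} \frac{\dim \Hom_G(\pi,\C[X])}{\dim \pi} \cdot \dim \pi = \sum_{\pi}\dim \Hom_G(\pi,\C[X])\cdot$ (no, $\chi_\pi(1) = \dim\pi$, so it is $\sum_\pi \frac{\dim\Hom_G(\pi,\C[X])}{\dim\pi^2}\dim\pi$) — I will instead get the term $\sum_g \sum_h \#X^{[g,h]}$ from $m=1$, $k=1$ whose left side is $\frac{1}{\#G^2}\sum_{a_1,b_1}\#X^{[a_1,b_1]}$, but the left-hand representation-theoretic side there is $\sum_\pi \frac{\dim\Hom_G(\pi,\C[X])}{\dim\pi}$, which is not obviously the same.

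So instead, the clean approach is: both identities in the corollary, divided by $\#G^2$, are equal respectively to $\sum_{\pi}\frac{\dim\Hom_G(\pi,\C[X])^3}{\dim\pi}$ (the right-hand side of the corollary, via $m=3, k=0, g=1$) and to $\sum_\pi \frac{\dim\Hom_G(\pi,\C[X])}{\dim\pi}\cdot$? — no. The truly clean route avoids the commutator side altogether: I claim $\sum_{g,h}\#X^{[g,h]} = \#G \cdot \sum_\pi \frac{m_\pi}{\dim\pi}\cdot\frac{?}{}$ — this is getting muddled, so let me just state the strategy. I will show both sides of the claimed identity, after dividing by $\#G^2$, are non-negative-coefficient expressions in $m_\pi := \dim\Hom_G(\pi,\C[X])$, namely $\frac{1}{\#G^2}\sum_{g,h}\#X^{[g,h]} = \sum_\pi \frac{m_\pi^{?}}{}$ via the $m=1,k=1$ case combined with the character-orthogonality identity $\frac{1}{\#G}\sum_{g}\#X^g\chi_\pi(g) = m_\pi$, and $\frac{1}{\#G^2}\sum_{g,h}\#X^g\#X^h\#X^{gh} = \sum_\pi \frac{m_\pi^3}{\dim\pi}$ via the $m=3, k=0$ case. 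Expanding $\#X^g = \sum_{\pi} m_\pi \chi_\pi(g)$ is not right ($\#X^g = \chi_{\C[X]}(g) = \sum_\pi m_\pi\chi_\pi(g)$, which \emph{is} right), one computes $\frac{1}{\#G^2}\sum_{g,h}\#X^{[g,h]}$ by the standard Frobenius commutator-trace identity $\frac{1}{\#G}\sum_{g,h}\chi_\pi([g,h]) = \frac{\#G}{\dim\pi}\chi_\pi(1)$ applied termwise, yielding $\sum_\pi \frac{m_\pi}{\dim\pi}\cdot\dim\pi\cdot$... The key point I will establish is thus:
\[
\frac{1}{\#G^2}\sum_{g,h\in G}\#X^{[g,h]} = \sum_{\pi\in\irr G}\frac{m_\pi^{?}}{}
\]
— and here, since $X = G/H$, the decomposition of $\C[X]$ is multiplicity-free exactly when each $m_\pi \in\{0,1\}$, equivalently $m_\pi^3 = m_\pi^{?}$ for all $\pi$. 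Concretely, the two expressions I will equate are $\sum_\pi m_\pi^2$ and $\sum_\pi m_\pi^3$ up to a common factor, or more precisely I will show the difference of the two sides equals $\#G^2\sum_\pi\left(\frac{m_\pi^3}{\dim\pi} - \frac{m_\pi^2}{\dim\pi}\cdot(\text{something})\right)$; since each summand is $\geq 0$ with equality iff $m_\pi\leq 1$, and the total vanishes iff every summand does, the identity holds iff $(G,H)$ is Gelfand.

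Here is the plan in order. (1) Invoke Theorem \ref{thm:main} with $g=1$, $k=0$, $m=3$ to get $\frac{1}{\#G^2}\sum_{g,h\in G}\#X^g\#X^h\#X^{gh} = \sum_{\pi\in\irr G}\frac{m_\pi^3}{\dim\pi}$, where I write $m_\pi = \dim\Hom_G(\pi,\C[X])$ and use $\chi_\pi(1)=\dim\pi$. (2) Invoke Theorem \ref{thm:main} with $g=1$, $k=1$, $m=1$ to get $\frac{1}{\#G^2}\sum_{g,h\in G}\#X^{[g,h]} = \sum_{\pi\in\irr G}\frac{m_\pi\cdot\dim\pi}{\dim\pi^2}\cdot\dim\pi = \sum_{\pi\in\irr G}m_\pi$; wait, that gives $\sum_\pi m_\pi$, not matching degrees — so instead I use $k=1$, $m=2$, which is not available since then there would be $p_1,p_2$; recheck: with $m=2,k=1,g=1$ the exponent is $m+2k-1 = 3$, left side $= \sum_\pi \frac{m_\pi^2}{\dim\pi^3}\cdot\dim\pi = \sum_\pi\frac{m_\pi^2}{\dim\pi^2}$. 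Hmm. In fact the cleanest matching: the corollary's LHS $\sum_{g,h}\#X^{[g,h]}$ corresponds to $\frac{1}{\#G^2}\sum_{g,h}\#X^{[g,h]}$ which by $(m,k)=(1,1)$ equals $\sum_\pi m_\pi$ — no wait, $(m,k)=(1,1)$: the theorem's last line with $m=1$ reads $\frac{1}{\#G^{2k}}\sum_{a_i,b_i}\#X^{g^{-1}[a_1,b_1]\cdots[a_k,b_k]}$ (the $\prod_{i=2}^m$ is empty), so with $k=1,g=1$: $\frac{1}{\#G^2}\sum_{a_1,b_1}\#X^{[a_1,b_1]}$, equal to $\sum_\pi \frac{m_\pi}{\dim\pi^2}\chi_\pi(1) = \sum_\pi\frac{m_\pi}{\dim\pi}$. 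And the corollary's RHS $\sum_{g,h}\#X^g\#X^h\#X^{gh}$ corresponds via $(m,k)=(3,0)$ to $\#G^2\sum_\pi\frac{m_\pi^3}{\dim\pi}$. These have different homogeneity in $m_\pi$, so the corollary as a biconditional must be using that for $X=G/H$ one has $m_\pi = \dim\pi^H$, hence $\sum_\pi\frac{m_\pi}{\dim\pi}\cdot(\ldots)$ — actually for $X = G/H$, $\frac{1}{\#G}\sum_g \#X^g\chi_\pi(g) = m_\pi = \dim\pi^H$, and there is the extra identity $\dim X = \sum_\pi m_\pi\dim\pi = [G:H]$. (3) Finally, form the difference RHS $-$ LHS of the corollary $= \#G^2\sum_\pi\left(\frac{m_\pi^3}{\dim\pi} - \frac{m_\pi}{\dim\pi}\right) = \#G^2\sum_\pi\frac{m_\pi(m_\pi-1)(m_\pi+1)}{\dim\pi}$, and observe each term is $\geq 0$ (as $m_\pi\geq 0$ is an integer, so $m_\pi(m_\pi^2-1)\geq 0$), with the term vanishing iff $m_\pi\in\{0,1\}$. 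Hence the whole sum is $0$ iff $m_\pi\leq 1$ for all $\pi$, i.e. iff $G/H$ is multiplicity free, i.e. iff $(G,H)$ is a Gelfand pair.

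The main obstacle — and the step requiring care — is matching the homogeneity degrees correctly, i.e. selecting the right $(m,k,g)$ so that the two instances of Theorem \ref{thm:main} produce precisely $\sum_\pi m_\pi^3/\dim\pi$ and $\sum_\pi m_\pi/\dim\pi$ (using that $\chi_\pi(1)=\dim\pi$ kills the remaining power of $\dim\pi$), and then recognizing the algebraic identity $m_\pi^3 - m_\pi = m_\pi(m_\pi-1)(m_\pi+1)$ together with the non-negativity and integrality of $m_\pi$ that forces the equality case to be exactly $m_\pi\le 1$. Everything else is bookkeeping: rewriting the sums in the statement of Theorem \ref{thm:main} after plugging in $g=1$, and reindexing the group variables $h_2,\dots,h_m,a_i,b_i$ to match the symmetric form $\#X^g\#X^h\#X^{gh}$ and $\#X^{[g,h]}$ appearing in the corollary.
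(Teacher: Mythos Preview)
Your final argument is correct and is exactly what the paper intends: apply Theorem~\ref{thm:main} with $(m,k,g)=(1,1,1)$ and with $(m,k,g)=(3,0,1)$ to rewrite the two sides of the claimed identity as $\#G^{2}\sum_{\pi}\frac{m_\pi}{\dim\pi}$ and $\#G^{2}\sum_{\pi}\frac{m_\pi^{3}}{\dim\pi}$ respectively (where $m_\pi=\dim\Hom_G(\pi,\C[X])$), and then observe that $m_\pi^{3}-m_\pi=m_\pi(m_\pi-1)(m_\pi+1)\geq 0$ for nonnegative integers with equality iff $m_\pi\leq 1$. The paper itself gives no proof beyond asserting that the corollary follows from Theorem~\ref{thm:main}, so this is precisely the intended derivation; you should, however, rewrite the proposal to suppress the false starts and present only the final two specializations and the nonnegativity argument.
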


In fact, Theorem \ref{thm:main} implies also the following more general statement: %In order
\begin{corollary}
Let $H \subset G$ be a pair of groups and let $X=G/H$.
For every $k, m\in \Z_{\geq 0}$
%with $m \equiv k \mod 2$
denote:
$$f(k,m):= \sum_{h_1,\dots h_m,a_1,\dots a_k,b_1,\dots b_k \in G} \#X^{h_1 \cdots h_m  \cdot  [a_1, b_1] \cdots  [a_k, b_k] }\prod_{i=1}^m \#X^{h_i}.
$$

Then, the following are equivalent:
\begin{itemize}
\item The pair  $(G,H)$ is a Gelfand pair.
\item For every $k, m\in \Z_{\geq 0}$ and $0 <  l\leq k$, we have
$f(k-l,m)=f(k,m+2l).$
\item For some $k, m\in \Z_{\geq 0}$ and $0<l\leq k$, we have
$f(k-l,m)=f(k,m+2l).$
\end{itemize}

\end{corollary}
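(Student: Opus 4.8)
The plan is to reduce the final corollary to Theorem~\ref{thm:main} by unwinding $f(k,m)$ in terms of the spectral side. Applying Theorem~\ref{thm:main} with $g=1$, the double-counting identity there says precisely that
\[
f(k,m) = \#G^{m+2k-1}\sum_{\pi\in\irr G}\frac{\dim(\Hom_G(\pi,\C[X]))^m}{(\dim\pi)^{m+2k-1}}.
\]
For $X=G/H$ we have $\dim\Hom_G(\pi,\C[X])=\dim\pi^H=:d_\pi\ge 0$, and I abbreviate $n_\pi:=\dim\pi\ge 1$. So, setting $N:=\#G$, the statement $f(k-l,m)=f(k,m+2l)$ becomes
\[
N^{m+2(k-l)-1}\sum_\pi \frac{d_\pi^{m}}{n_\pi^{m+2(k-l)-1}} \;=\; N^{m+2l+2k-1}\sum_\pi \frac{d_\pi^{m+2l}}{n_\pi^{m+2l+2k-1}},
\]
i.e., after dividing by the common factor $N^{m+2k-2l-1}$,
\[
\sum_\pi \frac{d_\pi^{m}}{n_\pi^{m+2k-2l-1}} \;=\; N^{4l}\sum_\pi \frac{d_\pi^{m+2l}}{n_\pi^{m+2l+2k-1}}.
\]
The first bullet $\Rightarrow$ second is then immediate: if $(G,H)$ is a Gelfand pair, every $d_\pi\in\{0,1\}$, so $d_\pi^{s}=d_\pi$ for all $s\ge 1$; since $l\ge 1$ and $m+2l\ge 1$ this forces $d_\pi^m = d_\pi^{m+2l}$ termwise (both equal $d_\pi$ when $m\ge 1$, and for $m=0$ one has $d_\pi^0=1$ but the exponent $m+2l\ge 2$ still gives $d_\pi^{m+2l}=d_\pi$ — wait, here one must be slightly careful, so I will instead argue directly on $f$). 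Cleanly: for a Gelfand pair, $\C[X]=\bigoplus_{\pi:d_\pi=1}\pi$ is multiplicity free, and going back to the combinatorial side, $f(k,m)=\sum_{p_i\in X,h_i\in G_{p_i},a_j,b_j} \mathbf 1[\prod h_i\prod[a_j,b_j]=1]$ counts configurations; Theorem~\ref{thm:main} identifies this with $N^{m+2k-1}\sum_\pi d_\pi^m n_\pi^{1-m-2k}$, and since $d_\pi\in\{0,1\}$ the quantity $d_\pi^m$ is independent of $m$ as long as we only compare $m$ with $m+2l$ and $\{0,1\}^m=\{0,1\}$ — precisely, $d_\pi^m = d_\pi^{m+2l}$ whenever $m\ge 1$ or $l\ge 1$, which holds here, so $f(k-l,m)=N^{m+2(k-l)-1}\sum d_\pi^m n_\pi^{1-m-2(k-l)}$ and $f(k,m+2l)=N^{m+2l+2k-1}\sum d_\pi^{m+2l} n_\pi^{1-m-2l-2k}$; matching them is the identity displayed above with $d_\pi^m$ replaced by the common value, and it holds because... here is the real point, and it is where I expect the argument to actually need its one nontrivial step.

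The genuine content is the implication \emph{third bullet $\Rightarrow$ first}: from a single equality $f(k-l,m)=f(k,m+2l)$ with $l\ge 1$, deduce $d_\pi\le 1$ for all $\pi$. The plan here is to use positivity and a termwise domination. Observe that $d_\pi \le d_\pi^2 \le \cdots$, with equality in the first inequality iff $d_\pi\in\{0,1\}$; more generally $d_\pi^{m} \le d_\pi^{m+2l}$ for every $\pi$ when $m\ge 1$, and $d_\pi^{m}\le d_\pi^{m+2l}$ also when $m=0$ and $d_\pi\ne 0$ (since then $1\le d_\pi^{2l}$), while for $d_\pi=0$ both sides vanish — so in all cases $d_\pi^m \le d_\pi^{m+2l}$. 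Hmm, but the two sums in $f(k-l,m)=f(k,m+2l)$ carry different powers of $N$ and of $n_\pi$, so a naive termwise comparison does not close. To fix the powers of $N$, I will instead compare $f$ at matched parameters: the cleanest route is to prove first that the second bullet holds (which I will do via the $\{0,1\}$-valued observation applied to the combinatorial formula, where the powers of $N$ and $n_\pi$ are \emph{forced to agree} because we are comparing $f(k-l,m)$ with $f(k,m+2l)$ and in Theorem~\ref{thm:main} these have \emph{the same} exponent triples once $d_\pi^m=d_\pi^{m+2l}$), and then show third $\Rightarrow$ first by contrapositive: if some $d_{\pi_0}\ge 2$, pick that $\pi_0$ and compare the two expressions for $f$ term by term. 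After dividing the identity $f(k-l,m)=f(k,m+2l)$ by $N^{m+2k-2l-1}$ as above, it reads $\sum_\pi d_\pi^m n_\pi^{-(m+2k-2l-1)} = N^{4l}\sum_\pi d_\pi^{m+2l} n_\pi^{-(m+2l+2k-1)}$. Since $n_\pi\le N$ for all $\pi$ (indeed $n_\pi^2\le N$), we have $N^{4l} n_\pi^{-(m+2l+2k-1)} \ge n_\pi^{2\cdot 4l} \cdot n_\pi^{-(m+2l+2k-1)} = n_\pi^{-(m+2k-1)+6l} \ge n_\pi^{-(m+2k-2l-1)}$ once $6l\ge 2l$, i.e. always — wait, I need the exponent on the RHS weight to be $\le$ that on the LHS, let me recompute: $-(m+2l+2k-1)+8l = -(m+2k-1)+6l \ge -(m+2k-2l-1)$ iff $6l\ge -2l+ \cdots$; I will nail down the exact inequality in the write-up, the point being that the RHS weight dominates the LHS weight termwise because $N$-powers overwhelm $n_\pi$-powers. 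Combined with $d_\pi^{m+2l}\ge d_\pi^m$ (shown above), each RHS term dominates the corresponding LHS term, so equality forces equality termwise, hence $d_\pi^{m+2l}= d_\pi^m$ and $d_\pi\in\{0,1\}$ for every $\pi$, i.e. $(G,H)$ is a Gelfand pair. Finally, second bullet $\Rightarrow$ third is trivial (take any admissible $k,m,l$, which exist since $k\ge 1,l=k,m=0$ works, or more simply $k=l=1,m=0$), and first bullet $\Rightarrow$ second is the $\{0,1\}$-valued computation, closing the cycle.

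The main obstacle is the bookkeeping in third $\Rightarrow$ first: one must arrange the comparison so that the honest inequalities $n_\pi\le N$ (equivalently $n_\pi^2 \le \#G$) and $d_\pi^m\le d_\pi^{m+2l}$ point the \emph{same} way for every $\pi$ simultaneously, so that a single scalar equality between two positive sums degenerates into termwise equalities. I expect the clean formulation is: from $f(k-l,m)=f(k,m+2l)$, substitute the right-hand (combinatorial) and the middle (spectral) expressions of Theorem~\ref{thm:main}, cancel the maximal common power of $\#G$, and verify $\#G^{\,c}\, (\dim\pi)^{-(m+2l+2k-1)} \ge (\dim\pi)^{-(m+2k-2l-1)}$ for the relevant constant $c=4l$ using $(\dim\pi)^2\le\#G$; since additionally $(\dim\pi^H)^{m+2l}\ge(\dim\pi^H)^{m}$, the inequality $\mathrm{LHS}\le\mathrm{RHS}$ holds termwise, forcing term-by-term equality, whence $(\dim\pi^H)\in\{0,1\}$ for all $\pi$. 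Everything else is formal manipulation of Theorem~\ref{thm:main}.
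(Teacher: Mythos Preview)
There is a genuine gap, and its root cause is a misprint in the corollary: the equality to test should be $f(k,m)=f(k-l,\,m+2l)$, not $f(k-l,m)=f(k,m+2l)$. You can see this already from the preceding corollary: it compares $f(1,0)$ with $f(0,2)$, which is the instance $k=1,\ m=0,\ l=1$ of $f(k,m)=f(k-l,m+2l)$, not of $f(k-l,m)=f(k,m+2l)$. And the statement as printed is simply false for nontrivial Gelfand pairs: take $G=\mathbb Z/2$, $H=\{e\}$, $X=G$; then $f(0,0)=\#X=2$ while $f(1,2)=32$.

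Once the indices are corrected, the argument the paper has in mind is a one-line termwise comparison with \emph{no} use of $n_\pi\le \#G$. First, matching $f(k,m)$ to the last displayed expression in Theorem~\ref{thm:main} (with $g=1$) requires the substitution $m'\!=m+1$, $k'\!=k$ (your identification is off by one: the sum there runs over $h_2,\dots,h_{m'}$), giving
\[
f(k,m)=\#G^{\,m+2k}\sum_{\pi\in\Irr G}\frac{d_\pi^{\,m+1}}{n_\pi^{\,m+2k-1}},\qquad d_\pi:=\dim\Hom_G(\pi,\C[X]),\ n_\pi:=\dim\pi .
\]
Then
\[
f(k-l,m+2l)-f(k,m)=\#G^{\,m+2k}\sum_{\pi}\frac{d_\pi^{\,m+1}\bigl(d_\pi^{\,2l}-1\bigr)}{n_\pi^{\,m+2k-1}}\ \ge\ 0,
\]
because the exponents on $\#G$ and on $n_\pi$ are \emph{identical} on both sides (both equal $m+2k$, resp.\ $m+2k-1$), so only the $d_\pi$-exponent moves. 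Each summand is nonnegative and vanishes exactly when $d_\pi\in\{0,1\}$; this gives all three equivalences at once.

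Your attempt to salvage the printed version via $n_\pi\le \#G$ cannot work: with that route, the termwise inequality you set up becomes an equality only if, in addition to $d_\pi\in\{0,1\}$, one has $n_\pi=\#G$ for every $\pi$ with $d_\pi>0$, which already fails for the trivial representation. The difficulty you encountered in pushing through (1)$\Rightarrow$(2) is the same phenomenon seen from the other side. Fix the index swap and drop the $n_\pi$-vs-$\#G$ estimate; the proof is then immediate from Theorem~\ref{thm:main}.
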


\section{Fock--Goncharov spaces}

Theorem \ref{thm:main} can also be interpreted as a counting formula for (generalized) Fock--Goncharov spaces, which we proceed to define. The setting for this section is as follows: let $\overline{S}$ be a compact surface, let $p_1,\ldots,p_m \in \overline{S}$, $m \geq 1$, be distinct points, {and denote $S=\overline{S}\smallsetminus \left\{ p_1,\ldots,p_m \right\}$. Such $S$ is called a surface of finite type}. Choose a base point $s\in S$ and, for each $i=1,\ldots,m$, choose a representative $\tau_i\in \pi_1(S,s)$ from the conjugacy class corresponding to a circle around $p_i$.

\begin{definition} Let $G$ be a group acting on a set $X$. An $X$-framed representation $\pi_1(S,s)\to G$ is a tuple $(\rho,x_1,\ldots,x_m)$, where $\rho : \pi_1(S,s) \rightarrow G$ is a homomorphism, and $x_i\in X$ satisfy $\rho(\tau_i)x_i=x_i$. The collection of all $X$-framed representations is denoted by $\widehat{\mathcal{X}}_{S,s,(\tau_i),G,X}$.
\end{definition}

If $s'$ and $\tau_i'$ are different choices of a point and loops, then there is a bijection (depending on a choice of {a path} from $s$ to $s'$) between $\widehat{\mathcal{X}}_{S,s,(\tau_i),G,X}$ and  $\widehat{\mathcal{X}}_{S,s',(\tau'_i),G,X}$. When no confusion arises, we will omit $s$ and $\tau_i$ from the notation.

If $\mathbf{G}$ is group scheme acting on a scheme $\mathbf X$, then the functor sending a scheme $T$ to $\widehat{\mathcal{X}}_{S,\mathbf G(T),\mathbf X(T)}$ {is} representable by a scheme that we denote by  $\widehat{\mathcal{X}}_{S,\mathbf G, \mathbf X}$.

\begin{definition} Let $\mathbf{G}$ be a group scheme acting on a scheme $\mathbf X$. {Then,}  $\mathbf G$ acts on $\widehat{\mathcal{X}}_{S,\mathbf G, \mathbf X}$, and we denote the quotient stack by $\mathcal{X}_{S,\mathbf G, \mathbf X}$. Similarly, if a group $G$ acts on a set $X$, we denote the quotient groupoid  $G \backslash \widehat{\mathcal{X}}_{S,G,X}$ by ${\mathcal{X}}_{S,G,X}$.
\end{definition}
\begin{remark}
$ $
\begin{itemize}
\item If $\mathbf X$ is the flag variety of a {reductive}  group $\mathbf G$, then the stack  $\mathcal{X}_{S,\mathbf G, \mathbf X}$ was defined in \cite{FG}. The authors of \cite{FG} {defined the notion of a framed $\mathbf{G}$ local system and }showed that {$\mathcal{X}_{S,\mathbf G, \mathbf X}$} is the moduli stack of framed $\mathbf G$ local systems on $S$ (see \cite[\S 2]{FG}). The notion of a framed $\mathbf G$ local system extends to general $\mathbf G$ and $\mathbf X${, and the same proof shows that $\mathcal{X}_{S,\mathbf G, \mathbf X}$ is the moduli space of framed $(\mathbf{G},\mathbf{X})$-local systems}.
\item  If $\mathbf{G}$ is connected, then{, by Lang's Theorem,} $\mathcal{X}_{S,\mathbf G , \mathbf X}(\mathbb{F}_p) \cong \mathcal{X}_{S, \mathbf G(\mathbb{F}_p), \mathbf X (\mathbb{F}_p)}.$
\end{itemize}
\end{remark}

In terms of the definitions above, Theorem \ref{thm:main} implies:
\begin{theorem}\label{thm:vol}
Let ${G}$ be a finite  group  acting on a finite set $X$. Then
$$\#\widehat{\mathcal{X}}_{S, G,  X}= {(\#G)^{1-\chi(S)}}\sum_{\pi \in \irr G}\frac{\dim(\Hom_G(\pi, \C[X]))^{\#(\overline S \smallsetminus S)}}{\dim \pi^{{-\chi(S)}}}, $$
and
$$vol({\mathcal{X}}_{S, G,  X}):=\sum_{x \text{ is an isomorphism class of } \mathcal{X}_{S, G,  X}}  \frac{1}{\# Aut(x)}= {(\#G)^{-\chi(S)}}\sum_{\pi \in \irr G}\frac{\dim(\Hom_G(\pi, \C[X]))^{\#(\overline S \smallsetminus S)}}{\dim \pi^{{-\chi(S)}}}. $$
\end{theorem}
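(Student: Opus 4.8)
The plan is to read off Theorem \ref{thm:vol} from Theorem \ref{thm:main} with $g$ equal to the identity, the only real work being a bookkeeping dictionary between topological and group-theoretic data.

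First I would recall the standard presentation of the fundamental group of a surface of finite type: if $\overline S$ is a closed orientable surface of genus $k$ and $S=\overline S\smallsetminus\{p_1,\dots,p_m\}$ with $m\geq 1$, then $\pi_1(S,s)$ has a presentation with generators $a_1,b_1,\dots,a_k,b_k,\tau_1,\dots,\tau_m$ — where $\tau_j$ can be taken in the conjugacy class of a small loop around $p_j$ — and the single relation $[a_1,b_1]\cdots[a_k,b_k]\cdot\tau_1\cdots\tau_m=1$; in particular $\pi_1(S,s)$ is free of rank $2k+m-1$. Thus a homomorphism $\rho\colon\pi_1(S,s)\to G$ is the same as a tuple $(\rho(a_1),\dots,\rho(b_k),\rho(\tau_1),\dots,\rho(\tau_m))\in G^{2k+m}$ obeying that one relation, and the framing condition $\rho(\tau_j)x_j=x_j$ says exactly that $\rho(\tau_j)\in G_{x_j}$. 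Hence $\widehat{\mathcal X}_{S,G,X}$ is in bijection with
\[
\{x_1,\dots,x_m\in X,\ h_1,\dots,h_m,a_1,\dots,a_k,b_1,\dots,b_k\in G\mid h_j\in G_{x_j},\ \textstyle\prod_{j=1}^m h_j\cdot\prod_{i=1}^k[a_i,b_i]=1\},
\]
which is precisely the set whose cardinality Theorem \ref{thm:main} computes for $g=1$. (The order of the two products is immaterial, since $uv=1\iff vu=1$ in a group, and the choice of representatives $\tau_j$ inside their conjugacy classes only changes this set by a bijection — this is the base-point/loop independence already noted after the definition of $\widehat{\mathcal X}$.)

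Next I would feed $g=1$ into Theorem \ref{thm:main}. Since $\chi_\pi(1)=\dim\pi$, its left-hand side collapses to $\sum_{\pi\in\irr G}\dim(\Hom_G(\pi,\C[X]))^m/\dim\pi^{\,m+2k-2}$, and its right-hand side is $(\#G)^{-(m+2k-1)}\cdot\#\widehat{\mathcal X}_{S,G,X}$. Finally I would translate exponents into Euler characteristics: $\chi(S)=\chi(\overline S)-m=2-2k-m$, so $1-\chi(S)=m+2k-1$, $-\chi(S)=m+2k-2$, and $\#(\overline S\smallsetminus S)=m$; substituting gives the first displayed identity. For the second, recall that in the quotient groupoid $\mathcal X_{S,G,X}=G\backslash\widehat{\mathcal X}_{S,G,X}$ the isomorphism classes are the $G$-orbits and $\Aut(x)=G_x$; by orbit–stabilizer, $1/\#G_x=\#(Gx)/\#G$, whence
\[
\vol(\mathcal X_{S,G,X})=\sum_{O}\frac{1}{\#G_x}=\frac{1}{\#G}\sum_{O}\#O=\frac{\#\widehat{\mathcal X}_{S,G,X}}{\#G},
\]
the sum running over $G$-orbits $O$ with $x\in O$ arbitrary. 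Dividing the first identity by $\#G$ turns $(\#G)^{1-\chi(S)}$ into $(\#G)^{-\chi(S)}$ and produces the second.

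I expect the only step needing genuine care to be the bijection of the first paragraph — lining up the single surface relation and the framing conditions with the combinatorial set of Theorem \ref{thm:main}, and checking that the dependence on auxiliary choices is harmless (implicitly this fixes $\overline S$ orientable, which is the setting where Theorem \ref{thm:main}'s commutator formula applies). After that, the theorem is a purely arithmetic consequence together with the one-line groupoid-cardinality computation.
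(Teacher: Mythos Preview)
Your proposal is correct and is exactly the argument the paper intends: the paper does not spell out a proof of Theorem \ref{thm:vol} but simply states that it is implied by Theorem \ref{thm:main}, and your dictionary (presentation of $\pi_1(S)$, substitution $g=1$, Euler-characteristic bookkeeping, and the orbit--stabilizer identity for the groupoid volume) is precisely the intended translation.
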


\begin{corollary}
%\begin{enumerate}
Let $G$ be a finite group acting on a finite set $X$. The following are equivalent:
\begin{itemize}
\item $X$ is a multiplicity free $G$-space.
\item For any two non-compact surfaces {of finite type} $S_1,S_2$ such that $\chi(S_1)=\chi(S_2)$, we have $vol({\mathcal{X}}_{S_1,G,X})=vol({\mathcal{X}}_{S_2,G,X})$.
\item {There are} two non homeomorphic non-compact surfaces {of finite type} $S_1,S_2$ such that $\chi(S_1)=\chi(S_2)$ and $vol({\mathcal{X}}_{S_1,G,X})=vol({\mathcal{X}}_{S_2,G,X})$.
\end{itemize}
%\end{enumerate}
\end{corollary}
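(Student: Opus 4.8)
The plan is to read everything off the volume formula of Theorem~\ref{thm:vol}. Write $n_\pi := \dim\Hom_G(\pi,\C[X])$ for the multiplicity of $\pi$ in $\C[X]$, $d_\pi := \dim\pi$, and for a connected non-compact surface of finite type $S$ put $m := \#(\overline S \smallsetminus S)\ge 1$. Then Theorem~\ref{thm:vol} reads
\[
vol(\mathcal{X}_{S,G,X}) = (\#G)^{-\chi(S)}\sum_{\pi\in\irr G} n_\pi^{\,m}\, d_\pi^{\,\chi(S)}.
\]
The only topology I will use is that a connected orientable surface of finite type is determined up to homeomorphism by the pair (genus, number of punctures), equivalently by $(\chi(S),m)$; hence $\chi(S_1)=\chi(S_2)$ together with $S_1\not\cong S_2$ forces $m_1\ne m_2$.

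For $(1)\Rightarrow(2)$: if $X$ is multiplicity free then $n_\pi\in\{0,1\}$, so $n_\pi^{\,m}=n_\pi$ for every $m\ge 1$, and the displayed formula becomes $vol(\mathcal{X}_{S,G,X})=(\#G)^{-\chi(S)}\sum_\pi n_\pi d_\pi^{\chi(S)}$, which depends on $S$ only through $\chi(S)$; thus any two non-compact finite-type surfaces with the same Euler characteristic have the same volume. For $(2)\Rightarrow(3)$ it is enough to produce one non-homeomorphic pair of non-compact finite-type surfaces sharing an Euler characteristic: take $S_1$ a once-punctured torus and $S_2$ a thrice-punctured sphere, both with $\chi=-1$; by $(2)$ their volumes agree, which is $(3)$.

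The essential implication is $(3)\Rightarrow(1)$, which I would prove by contraposition via a monotonicity argument. Fix $\chi_0:=\chi(S_1)=\chi(S_2)$ and regroup the sum by multiplicity value: for each integer $j\ge 1$ that occurs as some $n_\pi$ put $c_j:=\sum_{\pi:\,n_\pi=j} d_\pi^{\chi_0}$; this is a finite nonempty family of indices $j$, and every $c_j>0$ since $d_\pi^{\chi_0}>0$. Then $vol(\mathcal{X}_{S,G,X})=(\#G)^{-\chi_0}F(m)$ with $F(m):=\sum_j c_j j^m$. If $X$ is not multiplicity free, some index $j\ge 2$ occurs, so $F(m)=c_1+\sum_{j\ge 2}c_j j^m$, where the $c_1$-summand is present only if the multiplicity $1$ occurs and where the remaining sum contains at least one term; being a constant plus a nonnegative combination, with at least one strictly positive coefficient, of the strictly increasing functions $m\mapsto j^m$ ($j\ge 2$), $F$ is strictly increasing on $\{1,2,3,\dots\}$. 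Since $S_1\not\cong S_2$ and $\chi(S_1)=\chi(S_2)$ give $m_1\ne m_2$, say $m_1<m_2$, we conclude $vol(\mathcal{X}_{S_1,G,X})<vol(\mathcal{X}_{S_2,G,X})$, contradicting $(3)$. Hence $X$ is multiplicity free.

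I do not expect a serious obstacle; the one subtlety worth flagging is that a \emph{single} coincidence of volumes between two distinct surfaces of the same Euler characteristic already forces multiplicity-freeness, and this is precisely what the strict monotonicity of $F$ in $m$ provides (a non-multiplicity-free $X$ makes $F$ genuinely grow with the number of punctures). One should also keep the surfaces orientable, since that is the setting in which Theorem~\ref{thm:vol} was derived; for non-orientable $S$ the relevant counting formula, and hence the bookkeeping parameter replacing $m$, would have to be revisited.
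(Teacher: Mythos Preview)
Your proof is correct and is exactly the argument the paper has in mind: the corollary is stated without proof in the paper, as an immediate consequence of the volume formula in Theorem~\ref{thm:vol}, and your derivation---observing that under multiplicity-freeness $n_\pi^m=n_\pi$, and otherwise the function $m\mapsto\sum_j c_j j^m$ is strictly increasing---is the natural way to extract the equivalence. Your remarks on orientability and on the classification of finite-type surfaces by $(\chi,m)$ are the right sanity checks.
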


\begin{definition}
We say that a set $T$ of prime powers  is dense if, for any finite Galois extension $E/\mathbb Q$ and for any conjugacy class {$\gamma \subset Gal(E/\mathbb Q)$}, there exists $p^n\in T$ such that $p$ is unramified in $E$ and $\gamma =Fr_p^n$.
\end{definition}
\begin{remark}
$ $
\begin{itemize}
\item The Chebotarev Density Theorem says that the set of all primes is dense.
\item The Grothendieck trace formula implies that if $X_{1},X_{2}$ are two schemes such that $X_1(\mathbb{F}_q)=X_1(\mathbb{F}_q)$ {when $q$ ranges over a dense set of prime powers}, then  $X_1(\mathbb{F}_{p^n})=X_1(\mathbb{F}_{p^n})$ for almost all primes $p$ and for all natural numbers $n$.
\end{itemize}
\end{remark}

%\begin{definition}
%E poly
%\end{defnition}
%$ $
The last corollary and \cite{Kat} implies:
\begin{corollary} Let {$\mathbf{G}$ be a group scheme over $\mathbb{Z}$ acting} on a scheme $\mathbf X$. The following are equivalent:
\begin{itemize}
\item There is a dense set $T$ of prime powers such that, for any $q \in T$, the set $\mathbf X(\mathbb F_{q})$ is a multiplicity free $\mathbf G(\mathbb F_{q})$ space.
\item For all but finitely many primes $p$ and for all $n$, the set $\mathbf X(\mathbb F_{p^n})$ is a multiplicity free $\mathbf G(\mathbb F_{p^n})$ space.
\end{itemize}
Moreover, if these conditions hold then, for any two non-compact surfaces $S_1,S_2$ such that $\chi(S_1)=\chi(S_2)$, the varieties $\widehat{\mathcal{X}}_{S_1,\mathbf{G},\mathbf{X}}$ and $\widehat{\mathcal{X}}_{S_2,\mathbf{G},\mathbf{X}}$ have the same E-polynomial\footnote{For the definition of the E-polynomial see e.g. \cite{Kat}}.
\end{corollary}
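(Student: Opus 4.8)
The plan is to deduce this corollary from the preceding corollary (the one characterizing multiplicity-freeness via equality of volumes of Fock--Goncharov spaces) together with the spreading-out/interpolation machinery from \cite{Kat}. First I would establish the equivalence of the two bulleted conditions. For each prime power $q$, the condition ``$\mathbf{X}(\mathbb{F}_q)$ is multiplicity free as a $\mathbf{G}(\mathbb{F}_q)$-space'' is, by Theorem \ref{thm:main} (or its Gelfand-pair corollaries), equivalent to a numerical identity of the form $f_q(k-l,m) = f_q(k,m+2l)$, where both sides are expressible as $\#(\text{some scheme})(\mathbb{F}_q)$ --- concretely, as point-counts of fibered products of copies of $\mathbf{G}$ and the fixed-point schemes $\mathbf{X}^{g}$ over $\mathbb{Z}$ (away from finitely many bad primes where $\mathbf{G}$ or the action degenerates). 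Thus the assertion ``$\mathbf{X}(\mathbb{F}_q)$ is multiplicity free'' is precisely the assertion $Y_1(\mathbb{F}_q) = Y_2(\mathbb{F}_q)$ for two $\mathbb{Z}$-schemes $Y_1, Y_2$ obtained from the two sides of the identity with $k,l,m$ fixed (say $k=l=1$, $m=0$). Now I would invoke the stated consequence of the Grothendieck trace formula: if $Y_1(\mathbb{F}_q) = Y_2(\mathbb{F}_q)$ for $q$ in a dense set of prime powers, then $Y_1(\mathbb{F}_{p^n}) = Y_2(\mathbb{F}_{p^n})$ for almost all $p$ and all $n$; this gives the first bullet $\Rightarrow$ second bullet, and the reverse implication is immediate since ``almost all primes, all $n$'' is a dense set. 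One must be slightly careful that the finitely many excluded primes in passing between ``$\mathbf{X}(\mathbb{F}_q)$ multiplicity free'' and ``$Y_1(\mathbb{F}_q)=Y_2(\mathbb{F}_q)$'' are absorbed into the ``almost all'' and do not shrink the density of $T$; since removing finitely many primes from a dense set keeps it dense, this is fine.

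For the ``moreover'' clause, assume the equivalent conditions hold. Fix two non-compact finite-type surfaces $S_1, S_2$ with $\chi(S_1) = \chi(S_2)$. By Theorem \ref{thm:vol} applied over $\mathbb{F}_q$, for every prime power $q$ at which $\mathbf{G}$ is nice and $\mathbf{X}(\mathbb{F}_q)$ is multiplicity free, we have
\[
\#\widehat{\mathcal{X}}_{S_1, \mathbf{G}(\mathbb{F}_q), \mathbf{X}(\mathbb{F}_q)} = \#\widehat{\mathcal{X}}_{S_2, \mathbf{G}(\mathbb{F}_q), \mathbf{X}(\mathbb{F}_q)},
\]
because both sides equal $(\#G)^{1-\chi(S_i)}\sum_{\pi}\dim(\Hom_G(\pi,\C[X]))^{\#(\overline{S_i}\smallsetminus S_i)}/\dim\pi^{-\chi(S_i)}$, and in the multiplicity-free case the exponent $\#(\overline{S_i}\smallsetminus S_i)$ on a factor that is always $0$ or $1$ washes out --- so the sum depends only on $\chi(S_i)$ and $\#G$, which agree. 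If $\mathbf{G}$ is connected, Lang's theorem (quoted in the remark) identifies $\widehat{\mathcal{X}}_{S_i, \mathbf{G}(\mathbb{F}_q), \mathbf{X}(\mathbb{F}_q)}$ with $\widehat{\mathcal{X}}_{S_i, \mathbf{G}, \mathbf{X}}(\mathbb{F}_q)$; in general one works with the scheme $\widehat{\mathcal{X}}_{S_i, \mathbf{G}, \mathbf{X}}$ directly, which by its moduli description has $\mathbb{F}_q$-points whose count is given by the same product of $\#\mathbf{G}(\mathbb{F}_q)$'s and $\#\mathbf{X}^{w}(\mathbb{F}_q)$'s appearing in Theorem \ref{thm:main}. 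Hence $\widehat{\mathcal{X}}_{S_1,\mathbf{G},\mathbf{X}}$ and $\widehat{\mathcal{X}}_{S_2,\mathbf{G},\mathbf{X}}$ are two $\mathbb{Z}$-schemes with the same number of $\mathbb{F}_q$-points for $q$ in a dense set; by \cite{Kat} (the E-polynomial version of the trace-formula remark --- this is where the input from \cite{Kat} is genuinely used, not merely the bare point-count statement), two varieties with equal point-counts over a dense set of prime powers have equal E-polynomials.

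I expect the main obstacle to be bookkeeping rather than conceptual: (i) making precise that the numerical criterion for multiplicity-freeness coming from Theorem \ref{thm:main} really is a polynomial point-count, i.e. arises from honest $\mathbb{Z}$-schemes (this requires writing the fixed-point loci $\mathbf{X}^{g}$ and the commutator/word equations as closed subschemes of powers of $\mathbf{G}$ and $\mathbf{X}$, uniformly in the prime, which forces one to discard a controlled finite set of primes); and (ii) handling the stacky version $\mathcal{X}_{S,\mathbf{G},\mathbf{X}}$ versus the honest scheme $\widehat{\mathcal{X}}_{S,\mathbf{G},\mathbf{X}}$ --- the E-polynomial statement is cleanest for the latter, which is why the corollary is phrased with $\widehat{\mathcal{X}}$, and Lang's theorem is what lets us pass between $\mathbf{G}(\mathbb{F}_q)$-orbit counts and $\mathbb{F}_q$-points of $\widehat{\mathcal{X}}$ when $\mathbf{G}$ is connected. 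Once these are set up, the logical skeleton is just: numerical criterion $\leftrightarrow$ scheme-theoretic point-count identity over a dense set $\xRightarrow{\text{trace formula}}$ identity for almost all $(p,n)$, and separately point-count identity over a dense set $\xRightarrow{\cite{Kat}}$ equality of E-polynomials.
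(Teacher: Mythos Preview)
Your proposal is correct and follows essentially the same route as the paper, which simply states that the corollary follows from the preceding corollary (multiplicity-freeness $\Leftrightarrow$ equality of $\mathrm{vol}(\mathcal{X}_{S_i,G,X})$) together with the trace-formula remark and \cite{Kat}. One small simplification: your digression about Lang's theorem is unnecessary here, since by definition $\widehat{\mathcal{X}}_{S,\mathbf{G},\mathbf{X}}$ represents the functor $T\mapsto \widehat{\mathcal{X}}_{S,\mathbf{G}(T),\mathbf{X}(T)}$, so $\widehat{\mathcal{X}}_{S,\mathbf{G},\mathbf{X}}(\mathbb{F}_q)=\widehat{\mathcal{X}}_{S,\mathbf{G}(\mathbb{F}_q),\mathbf{X}(\mathbb{F}_q)}$ on the nose; Lang's theorem is only needed for the stack $\mathcal{X}$, which does not appear in the E-polynomial statement.
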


We will now apply Theorem \ref{thm:vol} for the case of $\mathbf{GL}_n$ acting on its flag variety $\mathbf{Fl}_{n}$. Recall that, if $\lambda=(\lambda_1,\dots,\lambda_m)$ is a partition of $n$ and $\lambda^*$ is the conjugate partition, then $$h_\lambda(i,j)=\lambda_i-j+\lambda^*_j-i+1$$ is the length of the hook in the Young diagram corresponding to $\lambda$ passing through the box $(i,j)$. We prove the following:

%Theorem \ref{thm:vol}, \cite[Chapter 10]{Jam}\RamiQ{In definition 10.19 he difine the notation $\dim(\mu)$ as some determinant, then he say that it will be a dimension of a certen submodule of the full induced. theorem 10.21 he give an explisit the expresion for  $\dim(\mu)$ that we use. In chapter 11 he define the relevant submodule $S(\mu)$ in theorem 12.12 he proves the inequalty $\dim(\mu)\leq \dim S(\mu)$. I do not know where he prove the other inequality. I also do not know whether he prove that the multiplisity of $S(\mu)$ is the dimension of the coresponding represntation of $S_n$} and \cite{Kat}  give us:

\begin{theorem} \label{thm:vol.FG}
$ $
\begin{itemize}
\item
$$
vol\left ( {\mathcal{X}}_{S,\mathbf{GL}_n,\mathbf{Fl}_n}(\mathbb{F}_q) \right )= (n!)^{\#\overline{S}\smallsetminus S} \sum\limits_{\lambda \text{ is a partition of }n} q^{\sum_k (k-1)\lambda_k \chi(S)}\prod_{i,j\, :\, j \leq \lambda_i}\frac{(q^{h_{\lambda}(i,j)}-1)^{-\chi(S)}}{h_{\lambda}(i,j)^{\#\overline{S}\smallsetminus S}}.
$$
\item The $E$ polynomial of $\widehat{\mathcal{X}}_{S,\mathbf{GL}_n,\mathbf{Fl}_n}$ is
$$
(n!)^{\#\overline{S}\smallsetminus S} \prod_{k=1}^n(x^ny^n-x^ky^k)   \sum\limits_{\lambda} (xy)^{\sum_k (k-1)\lambda_k \chi(S)}\prod_{i,j\, :\, j \leq \lambda_i}\frac{((xy)^{h_{\lambda}(i,j)}-1)^{-\chi(S)}}{h_{\lambda}(i,j)^{\#\overline{S}\smallsetminus S}}.
$$

\end{itemize}
\end{theorem}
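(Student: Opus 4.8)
The plan is to deduce both items by specializing Theorem~\ref{thm:vol} to the finite group $G=\mathbf{GL}_n(\mathbb F_q)$ acting on the finite set $X=\mathbf{Fl}_n(\mathbb F_q)=\mathbf{GL}_n(\mathbb F_q)/\mathbf B(\mathbb F_q)$ of complete flags. By representability of $\widehat{\mathcal X}_{S,\mathbf G,\mathbf X}$ one has $\widehat{\mathcal X}_{S,\mathbf{GL}_n,\mathbf{Fl}_n}(\mathbb F_q)=\widehat{\mathcal X}_{S,\mathbf{GL}_n(\mathbb F_q),\mathbf{Fl}_n(\mathbb F_q)}$, and, since $\mathbf{GL}_n$ is connected, the remark above (Lang's theorem) identifies $\mathcal X_{S,\mathbf{GL}_n,\mathbf{Fl}_n}(\mathbb F_q)$ with the finite quotient groupoid $\mathcal X_{S,\mathbf{GL}_n(\mathbb F_q),\mathbf{Fl}_n(\mathbb F_q)}$. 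So the $\mathbb F_q$-point counts and volumes are given by Theorem~\ref{thm:vol}, and it remains to evaluate its right-hand sides explicitly and, for the $E$-polynomial, to invoke Katz's theorem. For the evaluation I need two classical facts: the $\mathbf{GL}_n(\mathbb F_q)$-module structure of $\C[\mathbf{Fl}_n(\mathbb F_q)]$, and the dimensions of its irreducible constituents.

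As a $G$-module $\C[\mathbf{Fl}_n(\mathbb F_q)]=\Ind_{\mathbf B(\mathbb F_q)}^{\mathbf{GL}_n(\mathbb F_q)}\mathbf 1$, and its endomorphism algebra is the Iwahori--Hecke algebra $\mathcal H_n(q)$ of type $A_{n-1}$. Since $q$ is a prime power we have $1+q+\dots+q^{k-1}\neq 0$ in $\C$ for all $k\le n$, so $\mathcal H_n(q)$ is split semisimple; by Tits' deformation theorem its simple modules $M_\lambda$ are indexed by partitions $\lambda\vdash n$ with $\dim M_\lambda$ equal to the number $f^\lambda$ of standard Young tableaux of shape $\lambda$, and by the hook length formula $f^\lambda=n!\big/\prod_{i,j\,:\,j\le\lambda_i}h_\lambda(i,j)$. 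Hence $\C[X]=\bigoplus_{\lambda\vdash n}\pi_\lambda\otimes M_\lambda$, where $\pi_\lambda$ is the irreducible $G$-representation paired with $M_\lambda$, so that $\dim\Hom_G(\pi_\lambda,\C[X])=f^\lambda$ and every other irreducible contributes $0$. The $\pi_\lambda$ are exactly the unipotent characters of $\mathbf{GL}_n(\mathbb F_q)$, whose degrees are the ``fake degrees'', given by the $q$-analogue of the hook length formula: $\dim\pi_\lambda=q^{n(\lambda)}\prod_{k=1}^n(q^k-1)\big/\prod_{i,j\,:\,j\le\lambda_i}(q^{h_\lambda(i,j)}-1)$ with $n(\lambda)=\sum_k(k-1)\lambda_k$. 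Substituting these two displays and $\#\mathbf{GL}_n(\mathbb F_q)=\prod_{k=1}^n(q^n-q^{k-1})$ into the volume formula of Theorem~\ref{thm:vol} (with $m=\#(\overline S\smallsetminus S)$ and exponent $-\chi(S)$), the factors $\prod_k(q^k-1)$ cancel between $\#G$ and $\dim\pi_\lambda$, and collecting the surviving powers of $q$ and the factors $q^{h_\lambda(i,j)}-1$ and $h_\lambda(i,j)$ yields the first asserted formula after a direct computation.

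For the $E$-polynomial I use the first formula of Theorem~\ref{thm:vol} instead: the same substitution shows that $\#\widehat{\mathcal X}_{S,\mathbf{GL}_n,\mathbf{Fl}_n}(\mathbb F_q)$ equals $\#\mathbf{GL}_n(\mathbb F_q)$ times $(n!)^m$ times a sum over $\lambda$ of a monomial in $q$ times $\prod_{i,j\,:\,j\le\lambda_i}(q^{h_\lambda(i,j)}-1)^{-\chi(S)}/h_\lambda(i,j)^m$. One has to check that this expression is genuinely a polynomial in $q$; it is, because each factor $q^{h_\lambda(i,j)}-1$ occurring with a negative exponent is cancelled against the same factor appearing in $\dim\pi_\lambda$. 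Since $\widehat{\mathcal X}_{S,\mathbf{GL}_n,\mathbf{Fl}_n}$ is a scheme of finite type over $\mathbb Z$ with polynomial point count, Katz's theorem (\cite{Kat}) says that its $E$-polynomial is obtained from this polynomial by the substitution $q\mapsto xy$, and writing that out produces the second formula.

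The only genuinely non-formal input is the representation theory of $\mathbf{GL}_n(\mathbb F_q)$ invoked in the second paragraph --- the identification of $\End_G(\C[G/B])$ with the semisimple Iwahori--Hecke algebra together with the dimensions of its modules, and the fake-degree formula for unipotent character degrees. Both are standard, so the main obstacle is really organisational: keeping track of the bookkeeping in the substitution step, and, for the $E$-polynomial, confirming the polynomial-count property (so that Katz's criterion applies) rather than merely producing an integer-valued rational function of $q$.
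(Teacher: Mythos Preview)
Your approach is essentially the same as the paper's: both plug into Theorem~\ref{thm:vol} and reduce everything to the identity $\dim\Hom_{G}(R_\lambda,\C[\mathbf{Fl}_n])=f^\lambda$, established via a Hecke-algebra deformation argument (the paper spells out an analytic trivialization of $H^{S_n}(t)$ over $t\in\mathbb R_{\geq 1}$, while you invoke Tits' deformation theorem as a black box---same content). Your discussion of the $E$-polynomial via Katz is actually more explicit than the paper's; the only loose end is your justification of polynomial count, which is phrased imprecisely---the clean statement is that $\#G/\dim\pi_\lambda=q^{n(\lambda^*)}\prod_{i,j}(q^{h_\lambda(i,j)}-1)$ is itself a polynomial in $q$, so for $\chi(S)\le 0$ each summand $\#G\cdot(\#G/\dim\pi_\lambda)^{-\chi(S)}\cdot(f^\lambda)^m$ is a polynomial, and the remaining case $\chi(S)=1$ is immediate.
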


For the proof, we collect the following facts:

\begin{proposition}[\cite{jam}] For every partition $\lambda$ of $n$, there exists a unique irreducible representation $R_\lambda$ of $\mathbf{GL}_n(\mathbb{F}_q)$ satisfying: \begin{itemize}
\item $R_\lambda$ appears in the permutation representation $\mathbb{C}[\mathbf{GL}_n(\mathbb{F}_q) / \mathbf{P}_\lambda(\mathbb{F}_q)]$, where $\mathbf{P}_\lambda$ is the standard parabolic corresponding to $\lambda$ {(see \cite[Chapter 11]{jam})}.
\item $R_\lambda$ does not appear in the permutation representation $\mathbb{C}[\mathbf{GL}_n(\mathbb{F}_q) / \mathbf{P}_\mu(\mathbb{F}_q)]$, for $\mu < \lambda$ {(see \cite[Chapter 15]{jam})}.
\item {$$\dim R_\lambda=q^{\sum_k (k-1)\lambda_k} \frac{\#\GL_n(\mathbb{F}_q)}{\prod_{i,j: j \leq \lambda_i} (q^{h_\lambda(i,j)}-1)}.$$} {(see \cite[Page 2]{jam})}.
\end{itemize}
\end{proposition}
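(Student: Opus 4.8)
\textit{Proof plan (following \cite{jam}).} Write $G=\GL_n(\mathbb F_q)$, let $B\subset G$ be the Borel subgroup of upper triangular matrices, and for a partition $\lambda=(\lambda_1\geq\cdots\geq\lambda_r)$ of $n$ let $\mathbf P_\lambda\supseteq B$ be the standard parabolic with Levi factor $\GL_{\lambda_1}\times\cdots\times\GL_{\lambda_r}$; set $M^\lambda:=\mathbb C[G/\mathbf P_\lambda]$, so $M^{(n)}=\mathbb C$ and $M^{(1^n)}=\mathbb C[G/B]$. The plan is to pass to the Iwahori--Hecke algebra. First I would invoke Iwahori's theorem: the endomorphism algebra $\mathcal H:=\End_G(\mathbb C[G/B])$ is the Hecke algebra $\mathcal H_q(S_n)$, generated by operators $T_s$ ($s$ a simple transposition) with $(T_s-q)(T_s+1)=0$ and the braid relations of $S_n$. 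Since the prime power $q\geq 2$ is a positive real number, hence not a root of unity, $\mathcal H$ is split semisimple over $\mathbb C$; Tits' deformation theorem then gives an isomorphism $\mathcal H\cong\mathbb C[S_n]$ carrying each parabolic subalgebra $\mathcal H_\lambda$ to $\mathbb C[S_{\lambda_1}\times\cdots\times S_{\lambda_r}]$, compatibly with induction and restriction. In particular the simple $\mathcal H$-modules $V^\mu$ are canonically indexed by partitions $\mu\vdash n$, and the $q$-permutation module $N^\lambda$ --- induced from the index character $T_s\mapsto q$ of $\mathcal H_\lambda$ --- satisfies $[N^\lambda:V^\mu]=K_{\mu\lambda}$, the Kostka number, exactly as $\mathbb C[S_n/S_\lambda]$ does.

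Next I would apply the double centralizer theorem to the semisimple algebras $\mathbb C[G]$ and $\mathcal H$ acting on $V:=\mathbb C[G/B]$. This gives a $(G,\mathcal H)$-bimodule decomposition $V\cong\bigoplus_{\mu\vdash n}R_\mu\otimes V^\mu$ in which $R_\mu:=\Hom_{\mathcal H}(V^\mu,V)$ is an irreducible $G$-module, the $R_\mu$ are pairwise non-isomorphic, and each is nonzero since $\mathcal H$ acts faithfully on $V$. These are the required $\{R_\lambda\}_{\lambda\vdash n}$, each occurring in $\mathbb C[G/B]$ by construction. For a general standard parabolic, the averaging projection $\mathbb C[G/B]\twoheadrightarrow\mathbb C[G/\mathbf P_\lambda]$ is effected by the parabolic idempotent $e_\lambda\in\mathcal H_\lambda$, so $M^\lambda\cong Ve_\lambda\cong\bigoplus_\mu R_\mu\otimes(V^\mu e_\lambda)$ as $G$-modules, and $\dim(V^\mu e_\lambda)$ is the multiplicity of $V^\mu$ in $N^\lambda$. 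Combining with the previous step,
\[
\dim\Hom_G(R_\mu,M^\lambda)=[N^\lambda:V^\mu]=K_{\mu\lambda}.
\]

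The first two bullets follow from the combinatorics of Kostka numbers: $K_{\lambda\lambda}=1$, and $K_{\mu\lambda}=0$ unless $\mu$ dominates $\lambda$. Hence $R_\lambda$ occurs in $M^\lambda$ with multiplicity one, while $R_\lambda$ can occur in $M^\mu$ only when $\mu\trianglelefteq\lambda$; in the ordering of \cite{jam}, under which $\mu<\lambda$ means that $\mu$ strictly dominates $\lambda$, this is precisely the statement that $R_\lambda$ does not occur in any $M^\mu$ with $\mu<\lambda$, and it characterizes $R_\lambda$ uniquely as the only irreducible constituent of $M^\lambda$ absent from every smaller $M^\mu$. For the third bullet I would identify $\dim R_\lambda$ with the generic degree of the $\mathcal H_q(S_n)$-module $V^\lambda$; for type $A$ this generic degree is evaluated by the $q$-analogue of the Frame--Robinson--Thrall hook length formula, giving
\[
\dim R_\lambda=q^{n(\lambda)}\,\frac{\prod_{k=1}^{n}(q^k-1)}{\prod_{i,j\,:\,j\le\lambda_i}\bigl(q^{h_\lambda(i,j)}-1\bigr)},\qquad n(\lambda)=\sum_k(k-1)\lambda_k,
\]
which is the third bullet. (Alternatively, following James directly, one exhibits a basis of $R_\lambda$ by ``standard bideterminants'' via a $q$-straightening argument and counts it through the $q$-Robinson--Schensted--Knuth correspondence.)

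The hard part will be the two inputs that are classical but not formal: the identification $\End_G(\mathbb C[G/B])\cong\mathcal H_q(S_n)$ together with the ensuing semisimple Schur--Weyl duality --- which is what lets one read the $G$-module multiplicities in $M^\lambda$ off the Hecke-algebra side --- and the generic-degree (hook length) computation of $\dim R_\lambda$. The remaining ingredients (semisimplicity of $\mathcal H$, the double centralizer theorem, and the elementary facts $K_{\lambda\lambda}=1$ and $K_{\mu\lambda}\neq0\Rightarrow\mu\trianglerighteq\lambda$) are standard; all of this is carried out in detail in \cite{jam}.
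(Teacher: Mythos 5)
The paper itself gives no proof of this proposition: it is imported from James's book, with the three bullets referenced to \cite[Chapters 11, 15, and p.~2]{jam}. Measured against that, your plan is a different but entirely standard route: Iwahori's theorem for $\End_G(\mathbb{C}[G/B])$, semisimplicity of $\mathcal{H}_q(S_n)$ plus Tits deformation to $\mathbb{C}[S_n]$, the double centralizer theorem to produce the $R_\mu$, Kostka numbers $K_{\mu\lambda}$ for the multiplicities in $\mathbb{C}[G/\mathbf{P}_\lambda(\mathbb{F}_q)]$, and the generic-degree ($q$-hook) formula for the dimension. James's own development is instead via $q$-analogues of Specht module theory (bideterminants and straightening), as you note in passing; your Hecke-theoretic route is in fact close in spirit to the interpolation argument the authors do write out later when proving Theorem \ref{thm:vol.FG}. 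Your reading of the ordering convention (that $\mu<\lambda$ must be taken so that the second bullet excludes the more dominant partitions, otherwise the two bullets do not characterize $R_\lambda$) is also the right one, and the reduction of the first two bullets to $K_{\lambda\lambda}=1$ and $K_{\mu\lambda}\neq 0\Rightarrow\mu\trianglerighteq\lambda$ is correct.

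One concrete point needs attention: the formula you derive is \emph{not} the third bullet as printed. You obtain $\dim R_\lambda=q^{n(\lambda)}\prod_{k=1}^{n}(q^k-1)\big/\prod_{i,j:\,j\le\lambda_i}(q^{h_\lambda(i,j)}-1)$ with $n(\lambda)=\sum_k(k-1)\lambda_k$, whereas the proposition has $\#\GL_n(\mathbb{F}_q)=q^{\binom{n}{2}}\prod_{k=1}^{n}(q^k-1)$ in the numerator; the two differ by the factor $q^{\binom{n}{2}}$, so the step ``which is the third bullet'' is literally false. The sanity check $\lambda=(n)$ settles which version is right: then $\mathbf{P}_{(n)}=\mathbf{GL}_n$, so by the first bullet $R_{(n)}$ is the trivial representation and has dimension $1$; the hooks are $n,n-1,\dots,1$ and $n(\lambda)=0$, so your formula gives $1$ while the printed one gives $q^{\binom{n}{2}}$ (for $n=2$ the printed formula yields degrees $\{q,q^2\}$ instead of the unipotent degrees $\{1,q\}$). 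So your generic-degree computation is the correct statement and the bullet as printed carries a stray factor of $q^{\binom{n}{2}}$; you should say this explicitly rather than assert the match, and note that the same normalization propagates into the formulas of Theorem \ref{thm:vol.FG}.
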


Let $\mathbf{B}\subset \mathbf{GL}_n$ be the standard Borel. Taking $T_\lambda=R_\lambda ^{\mathbf{B}(\mathbb{F}_q)}$, we get

\begin{corollary} \label{cor:T.lambda} For every partition $\lambda$ of $n$, {we have} \begin{itemize}
\item $T_\lambda$ appears in the representation $\mathbb{C}[\mathbf{GL}_n(\mathbb{F}_q) / \mathbf{P}_\lambda(\mathbb{F}_q)]^{\mathbf{B}(\mathbb{F}_q)}$.
\item $T_\lambda$ does not appear in the representation $\mathbb{C}[\mathbf{GL}_n(\mathbb{F}_q) / \mathbf{P}_\mu(\mathbb{F}_q)]^{\mathbf{B}(\mathbb{F}_q)}$, for $\mu < \lambda$.
\end{itemize}
\end{corollary}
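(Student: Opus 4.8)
The plan is to transfer both statements of the preceding Proposition through the functor of $B$-invariants. Write $G=\mathbf{GL}_n(\mathbb F_q)$, $B=\mathbf B(\mathbb F_q)$, $P_\mu=\mathbf P_\mu(\mathbb F_q)$, and let $\mathcal H=\End_G(\C[G/B])$ be the Iwahori--Hecke algebra. By Frobenius reciprocity the functor $V\mapsto V^B$ on finite-dimensional $\C[G]$-modules is naturally identified with $\Hom_G(\C[G/B],-)$, and since we are in characteristic $0$ it is exact and additive and commutes with finite direct sums. The strategy is to upgrade this to the assertion that $V\mapsto V^B$ turns multiplicities of irreducibles into multiplicities of their spaces of $B$-invariants, at least within the principal series, and then to read off the corollary from the Proposition.

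First I would record two observations. The canonical surjection $G/B\twoheadrightarrow G/P_\mu$ (defined because $B\subseteq P_\mu$) induces an embedding of $G$-modules $\C[G/P_\mu]\hookrightarrow\C[G/B]$, which splits by semisimplicity; hence every irreducible constituent $\rho$ of $\C[G/P_\mu]$ is a summand of $\C[G/B]$ and so satisfies $\rho^B\neq 0$. Applying this to the first bullet of the Proposition, $R_\lambda$ is a constituent of $\C[G/P_\lambda]$, so $T_\lambda=R_\lambda^B\neq 0$; in particular $T_\lambda$ is a genuine, nonzero $\mathcal H$-module. Next, let $\mathcal B$ be the full subcategory of $\C[G]$-modules all of whose irreducible constituents have nonzero $B$-fixed vectors; by the previous observation $R_\lambda$ and all the $\C[G/P_\mu]$ lie in $\mathcal B$. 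The key input is that $\C[G/B]$ is a projective generator of $\mathcal B$ --- projectivity is automatic in characteristic $0$, and by Frobenius reciprocity every simple object of $\mathcal B$ is a quotient, hence (by semisimplicity) a direct summand, of $\C[G/B]$ --- so Morita theory yields an equivalence between $\mathcal B$ and the category of $\mathcal H$-modules under which $V$ corresponds to $V^B$. This is the classical principal-series / Iwahori--Hecke dictionary over a finite field.

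Granting this, the corollary is immediate: an equivalence of categories preserves and reflects the relation ``is a constituent of'' (equivalently, for a simple object, the non-vanishing of the relevant $\Hom$-space), so the first bullet of the Proposition (``$R_\lambda$ appears in $\C[G/P_\lambda]$'') transfers to ``$T_\lambda$ appears in $\C[G/P_\lambda]^B$'', and the second bullet (``$R_\lambda$ does not appear in $\C[G/P_\mu]$ for $\mu<\lambda$'') transfers to ``$T_\lambda$ does not appear in $\C[G/P_\mu]^B$ for $\mu<\lambda$''. The one substantive point I expect to need care is the equivalence $\mathcal B\simeq\mathcal H\text{-mod}$ --- specifically, the fact that $V\mapsto V^B$ not only preserves but \emph{detects} multiplicities of principal-series irreducibles, which is what makes the negative statement (and not merely the positive one) transfer. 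Once the progenerator property of $\C[G/B]$ is in place this is a standard application of Morita theory; alternatively one may simply cite the known bijection $\pi\leftrightarrow\pi^B$ between the unipotent irreducibles of $\mathbf{GL}_n(\mathbb F_q)$ and the irreducibles of $\mathcal H$.
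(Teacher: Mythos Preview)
Your argument is correct, and it is precisely the justification that the paper omits: the paper states the corollary immediately after the Proposition with the single sentence ``Taking $T_\lambda=R_\lambda^{\mathbf{B}(\mathbb{F}_q)}$, we get'', offering no further detail. Your proof via the Morita equivalence $\mathcal B\simeq\mathcal H\text{-mod}$ (with $\C[G/B]$ as progenerator) is the standard way to unpack this, and you correctly isolate the one genuinely nontrivial point: the \emph{positive} bullet follows from mere exactness of $V\mapsto V^B$, but the \emph{negative} bullet requires that $(-)^B$ also \emph{reflects} constituents, which is exactly what the equivalence supplies.
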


{The following is classical:}

\begin{proposition} \label{prop:pi.lambda} For every partition $\lambda$ of $n$, there exists a unique irreducible representation $\pi_\lambda$ of $S_n$ satisfying: \begin{itemize}
\item $\pi_\lambda$ appears in the permutation representation $\mathbb{C}[S_n / S_\lambda]$, where $S_{(\lambda_1,\ldots,\lambda_m)}=S_{\lambda_1} \times \cdots \times S_{\lambda_m} \subset S_n$.
\item $\pi_\lambda$ does not appear in the permutation representation $\mathbb{C}[S_n / S_\mu]$, for $\mu < \lambda$.
\item {$\dim \pi_\lambda=\frac{n!}{\prod_{i,j : i \leq \lambda_j}h_\lambda(i,j)}$.}
\end{itemize}
\end{proposition}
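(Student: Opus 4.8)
The plan is to identify $\pi_\lambda$ with the Specht module $S^\lambda$ and to read off the three asserted properties from the classical structure theory of the permutation modules $M^\mu:=\C[S_n/S_\mu]$. First I would recall the setup --- $M^\mu$ has the $\mu$-tabloids as a basis, and $S^\mu\subseteq M^\mu$ is the span of the polytabloids --- and then quote three standard facts (see James, Fulton--Harris, or Sagan): the modules $S^\mu$, as $\mu$ runs over the partitions of $n$, are irreducible, pairwise non-isomorphic, and, being as numerous as the conjugacy classes of $S_n$, exhaust $\irr S_n$; Young's rule $M^\mu\cong\bigoplus_\nu K_{\nu\mu}S^\nu$, where $K_{\nu\mu}$ is the number of semistandard tableaux of shape $\nu$ and content $\mu$; and the triangularity $K_{\mu\mu}=1$ together with $K_{\nu\mu}=0$ unless $\nu$ dominates $\mu$.

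Granting these, I set $\pi_\lambda:=S^\lambda$. Young's rule and the triangularity give at once that $\pi_\lambda$ occurs in $M^\lambda=\C[S_n/S_\lambda]$ with multiplicity $K_{\lambda\lambda}=1$, and that $S^\lambda$ can occur in $M^\mu=\C[S_n/S_\mu]$ only when $\lambda$ dominates $\mu$; in particular it fails to occur in $M^\mu$ whenever $\mu$ strictly dominates $\lambda$, which is exactly what ``$\mu<\lambda$'' means in the ordering convention of the preceding ($\mathbf{GL}_n$) proposition. For uniqueness I would argue that any irreducible occurring in $M^\lambda$ is some $S^\nu$ with $\nu$ dominating $\lambda$; if it also fails to occur in every $M^\mu$ with $\mu<\lambda$, then, taking $\mu=\nu$ and using $S^\nu\subseteq M^\nu$, we must have $\nu=\lambda$. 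Finally, the dimension statement is the Frame--Robinson--Thrall hook length formula $\dim S^\lambda=n!\big/\prod_{(i,j)\in\lambda}h_\lambda(i,j)$, which is the displayed expression; I would cite it, or else derive it from the determinantal formula $\dim S^\lambda=n!\det\bigl(1/(\lambda_i-i+j)!\bigr)_{i,j}$, and note that it is forced by the $q\to 1$ limit of $\dim R_\lambda$ in the preceding proposition.

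I do not anticipate a genuine obstacle: the statement is textbook, which is why it is flagged as classical, so in the paper it should simply be quoted. If one insisted on a self-contained proof, the single substantive ingredient would be Young's rule together with the dominance-triangularity of the Kostka numbers --- equivalently, James's submodule theorem --- whose proof runs through the combinatorics of semistandard tableaux and the Garnir straightening relations; the hook length formula is the only other classical input, and both are available off the shelf.
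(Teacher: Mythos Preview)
Your proposal is correct and matches the paper's treatment: the paper simply labels the proposition as ``classical'' and states it without proof, exactly as you anticipated. Your sketch via Specht modules, Young's rule with the dominance-triangularity of the Kostka numbers, and the hook length formula is the standard argument and would serve as a complete justification if one were required.
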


\begin{proof}[Proof of Theorem \ref{thm:vol.FG}] Since $\dim \Hom(R_\lambda,\mathbb{C}[\mathbf{Fl}_n])=\dim T_\lambda$, it is enough to show that $\dim T_\lambda = \dim \pi_\lambda$, for every $\lambda$. Recall that the Hecke algebra {$H^{S_n}(t)$} corresponding to the Coxeter group $S_n$ is a (polynomial) one parameter family of algebras whose underlying vector space is $\mathbb{C}[S_n]$; we denote the product in $H^{S_n}(t)$ by $*_t$. Recall that the product $*_1$ is the convolution on $\mathbb{C}[S_n]$ and that, if $t$ is a prime power, then the product $*_t$ corresponds to the convolution in $\mathbb{C}[\mathbf{B}(\mathbb{F}_t) \backslash \mathbf{GL}_n(\mathbb{F}_t)/\mathbf{B}(\mathbb{F}_t)]$ under the identification $\mathbb{C}[\mathbf{B}(\mathbb{F}_t) \backslash \mathbf{GL}_n(\mathbb{F}_t)/\mathbf{B}(\mathbb{F}_t)] \cong \mathbb{C}[S_n]$ given by the Bruhat decomposition. Let $M_\lambda(t) \subset H^{S_n}(t)$ be the subspace of $S_\lambda$-(right)-invariant elements {of $\mathbb{C}[S_n]$}. For every prime power $t$, $M_\lambda(t)$ is an ideal, and, hence, the same is true for every $t$. Using the interpolation of the natural inner product, we get that, for $t\in \mathbb{R}_{\geq 1}$, the algebra $H^{S_n}(t)$ is semisimple, and, hence, there is an (analytic) trivialization of $H^{S_n}(t)$ over $\mathbb{R}_{\geq 1}$. Since there are only finitely many isomorphism types of representations of a given dimension, we get that $M_\lambda(t)$ can also be trivialized over $\mathbb{R}_{\geq 1}$. Corollary \ref{cor:T.lambda} and Proposition \ref{prop:pi.lambda} imply that, under the algebra isomorphism $\mathbb{C}[S_n] \rightarrow \mathbb{C}[\mathbf{B}(\mathbb{F}_q) \backslash \mathbf{GL}_n(\mathbb{F}_q)/\mathbf{B}(\mathbb{F}_q)]$, the modules $T_\lambda$ and $\pi_\lambda$ are isomorphic, and hence have the same dimension.
\end{proof}

\section{Proof of Theorem \ref{thm:main}} \label{sec:pf}
The case $k=0$, $m=1$ of theorem \ref{thm:main} is easy:
%The proof of theorem \ref{thm:main} is based on the following lemma which %is the special case of theorem \ref{thm:main} for $k=0$, $m=1$:
\begin{lemma}\label{lem:key}%[Key Lemma]
Let $G$ be a finite group acting on a finite set $X$. Then:
\begin{equation}\label{eq:key}
\sum_{\pi \in \irr G}\dim(\Hom_G(\pi, \C[X]))\cdot \chi_\pi(g) =\chi_{\C[X]}(g)= \#X^g.
\end{equation}
 \end{lemma}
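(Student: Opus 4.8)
The plan is to identify the middle term $\chi_{\C[X]}(g)$ in two ways: once via the decomposition of $\C[X]$ into irreducibles, and once by a direct fixed-point count. First I would write $\C[X] \cong \bigoplus_{\pi \in \irr G} \pi^{\oplus m_\pi}$ as a $G$-representation, where $m_\pi = \dim \Hom_G(\pi, \C[X])$ is the multiplicity; this is just semisimplicity of $\C[G]$-modules (Maschke's theorem), valid since $G$ is finite and we work over $\C$. Taking characters of both sides and using additivity of $\chi$ under direct sums gives $\chi_{\C[X]}(g) = \sum_{\pi \in \irr G} m_\pi \, \chi_\pi(g)$, which is precisely the left-hand equality in \eqref{eq:key}.

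For the right-hand equality, I would compute $\chi_{\C[X]}(g)$ directly from the definition of the permutation representation. The group element $g$ acts on the basis $\{ \delta_x : x \in X \}$ of $\C[X]$ by permuting basis vectors according to the action of $g$ on $X$: namely $g \cdot \delta_x = \delta_{g x}$. With respect to this basis, the matrix of the action of $g$ is a permutation matrix, and its trace counts the number of basis vectors fixed by $g$, i.e. the number of $x \in X$ with $gx = x$. By definition this is $\# X^g$, the cardinality of the fixed-point set of $g$ acting on $X$. Combining the two computations yields \eqref{eq:key}.

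There is no real obstacle here; the only things to be careful about are (i) making sure we use the character of the full permutation representation $\C[X]$ rather than some quotient or subrepresentation, so that the basis-permutation-matrix argument applies verbatim, and (ii) noting that the decomposition into isotypic components is multiplicity-finite because $X$ is finite, so all sums are finite and the character identity is a genuine equality of complex numbers. This lemma is exactly the $k=0$, $m=1$ instance of Theorem \ref{thm:main}: the right-hand side of that theorem in this case reads $\frac{1}{\# G^{0}} \sum_{\text{(empty)}} \# X^{g^{-1}} = \# X^{g^{-1}}$, and one checks $\# X^{g^{-1}} = \# X^g$ since $x \mapsto x$ is a bijection between $X^g$ and $X^{g^{-1}}$ (as $gx=x \iff g^{-1}x = x$); equivalently one may simply replace $g$ by $g^{-1}$ throughout, which is harmless. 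This base case, together with a convolution/induction argument over the surface's handle and puncture structure, is what the full proof of Theorem \ref{thm:main} will bootstrap from.
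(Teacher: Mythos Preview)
Your proof is correct and matches the paper's approach exactly: decompose $\C[X]$ into irreducibles via Maschke and Schur to get the left equality, then compute the trace of $g$ on the permutation basis to get the right equality. The paper in fact treats this lemma as immediate (``easy''), and your write-up supplies precisely the standard justification.
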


%   \begin{proof}
%  Consider the representation $\C[X]$. We have  a natural map $$\bigoplus_{\pi \in \irr G} \Hom(\pi, \C[X]) \otimes \pi \to \C[X].$$ By Maschke theorem and the Schur lemma this map is an isomorphism. \nir{do we really need the explanation? the claim is clear} Thus $$\chi_{\C[X]}(g)=\sum_{\pi \in \irr G}\dim(\Hom(\pi, \C[X]))\cdot \chi_\pi(g).$$  On the other hand
%
% $$\chi_{\C[X]}(g)=\tr(g|_{\C[X]})=\#X^g.$$
%
% This proves the assertion.
%  \end{proof}

In order to deduce the general case we  need a basic fact about convolution of characters. Recall that for two functions $f,g \in \C[G]$, the convolution is defined by $$\left(f \ast g\right)(h)=\sum_{u\in G}f(u)g(u^{-1}h).$$

\begin{lemma}\label{lem:conv}
For any $\pi,\tau \in \Irr G$ we have:
$$\chi_\pi \ast \chi_\tau=\frac{\delta_{\pi,\tau} \#G}{\dim(\pi)}\chi_\pi.$$
\end{lemma}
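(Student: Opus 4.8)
The plan is to reduce the identity to the Schur orthogonality relations for matrix coefficients. Fix matrix realizations $\rho^\pi\colon G\to\GL_{\dim\pi}(\C)$ and $\rho^\tau\colon G\to\GL_{\dim\tau}(\C)$ of the two irreducibles, with entries $\rho^\pi_{ij}(g)$, $\rho^\tau_{ij}(g)$, so that $\chi_\pi(g)=\sum_i\rho^\pi_{ii}(g)$ and similarly for $\tau$. The only external input I will use is the bilinear form of Schur orthogonality, valid with no unitarity assumption:
\[
\sum_{u\in G}\rho^\pi_{ab}(u)\,\rho^\tau_{cd}(u^{-1})=\frac{\#G}{\dim\pi}\,\delta_{\pi,\tau}\,\delta_{ad}\,\delta_{bc}.
\]
This is itself the standard averaging argument: the operator $\sum_{u}\rho^\pi(u)E_{bc}\rho^\tau(u^{-1})$ intertwines $\tau$ and $\pi$, hence vanishes unless $\pi\cong\tau$ and is a scalar when $\pi=\tau$, and one reads off the scalar from its trace.

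The computation itself is then short. First I would split off the dependence on $h$ using $\rho^\tau(u^{-1}h)=\rho^\tau(u^{-1})\rho^\tau(h)$:
\[
(\chi_\pi\ast\chi_\tau)(h)=\sum_{u\in G}\chi_\pi(u)\,\chi_\tau(u^{-1}h)
=\sum_{u\in G}\Bigl(\sum_i\rho^\pi_{ii}(u)\Bigr)\Bigl(\sum_{j,k}\rho^\tau_{jk}(u^{-1})\,\rho^\tau_{kj}(h)\Bigr).
\]
Interchanging the finite sums and applying the orthogonality relation to $\sum_u\rho^\pi_{ii}(u)\rho^\tau_{jk}(u^{-1})=\frac{\#G}{\dim\pi}\delta_{\pi,\tau}\,\delta_{ik}\,\delta_{ij}$ collapses everything:
\[
(\chi_\pi\ast\chi_\tau)(h)=\frac{\#G}{\dim\pi}\,\delta_{\pi,\tau}\sum_i\rho^\pi_{ii}(h)=\frac{\#G}{\dim\pi}\,\delta_{\pi,\tau}\,\chi_\pi(h),
\]
which is the assertion.

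A cleaner variant, if one is willing to invoke the Wedderburn decomposition $\C[G]\cong\prod_{\sigma\in\Irr G}\End(V_\sigma)$, is to view $\chi_\pi,\chi_\tau$ inside the group algebra $(\C[G],\ast)$ and verify the identity after applying each projection $\sigma$. Since $\chi_\pi$ is a class function, $\sigma(\chi_\pi):=\sum_g\chi_\pi(g)\rho^\sigma(g)$ commutes with $\sigma(G)$, hence is a scalar, and orthogonality of characters (via the trace) pins that scalar down; multiplying the scalars attached to $\pi$ and $\tau$ and comparing with the one attached to $\tfrac{\#G}{\dim\pi}\chi_\pi$ yields the claim blockwise, hence globally.

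I do not anticipate a real obstacle: the statement is classical and the argument is a few lines. The only points requiring care are using the \emph{bilinear} orthogonality relation (pairing $u$ with $u^{-1}$) rather than the Hermitian one, and peeling off the $h$-dependence via the homomorphism identity $\rho^\tau(u^{-1}h)=\rho^\tau(u^{-1})\rho^\tau(h)$ \emph{before} summing over $u$; the rest is index bookkeeping.
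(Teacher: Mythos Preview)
Your argument is correct and is the standard derivation via Schur orthogonality for matrix coefficients. Note, however, that the paper does not actually supply a proof of this lemma: it is introduced as ``a basic fact about convolution of characters'' and then used without further justification, so there is no paper proof to compare against. Your write-up would serve perfectly well as the missing verification.
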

Now we ready to prove the main theorem.
 \begin{proof}[Proof of theorem \ref{thm:main}]
 Applying Lemma \ref{lem:conv}, the assertion follows by convolving  \eqref{eq:key} with itself $m$ times and with  the formula in Theorem \ref{thm:frob.g}.
\end{proof}

% \begin{proof}[Proof of theorem \ref{thm:main.sph}]
%It follows from theorem \ref{thm:main} and Lemma \ref{lem:sp.char.int} \end{proof}

\section{Acknowledgments}
We thank Inna Entova Aizenbud for a helpful conversation. A.A. was partially supported by ISF grant 687/13 and a Minerva foundation grant. N.A. was partially supported by NSF grant DMS-1303205. A.A. and N.A. were partially supported by BSF grant 2012247. {N.A. thanks the Weizmann Institute for hospitality.}
\appendix
\section{An alternative proof of the Frobenus  formula}
Lemma \ref{lem:key} gives an alternative proof of the  Frobenius formula (Theorem \ref{thm:frob}).

{Let $G$ be a finite group acting} on a finite set $X$. For a representation $\pi$ of $G$, define a function {on $X \times X$ by}
\begin{equation}\label{eq:sphchar} \chi_\pi^X(x,y)=\frac{1}{\#G}\sum_{h\text{ : } hx=y}\chi_\pi(h).\end{equation}

 \begin{lemma}\label{lem:sp.char}
Consider the 2-sided action of $G\times G$ on $G$. Let $\pi$ be a representation of $G$. Then
$$ \chi_{\pi\otimes \pi^*}^G(1,g)=\frac{1}{\#G \dim\pi}\chi_\pi(g).$$
 \end{lemma}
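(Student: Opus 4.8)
The plan is to unwind the definition of the spherical character $\chi_{\pi\otimes\pi^*}^G$ for the two-sided action of $G\times G$ on $G$, and to recognize the resulting sum as a convolution of characters that can be evaluated by Lemma \ref{lem:conv}. First I would observe that for the $G\times G$-action on $G$ given by $(h_1,h_2)\cdot h=h_1hh_2^{-1}$, the stabilizer of $1\in G$ is the diagonal, and the element $(h_1,h_2)$ sends $1$ to $g$ precisely when $h_1h_2^{-1}=g$, i.e. when $(h_1,h_2)=(gh_2,h_2)$ as $h_2$ ranges over $G$. Plugging into \eqref{eq:sphchar}, and using that the character of $\pi\otimes\pi^*$ as a $G\times G$-representation is $\chi_{\pi\otimes\pi^*}(h_1,h_2)=\chi_\pi(h_1)\overline{\chi_\pi(h_2)}=\chi_\pi(h_1)\chi_\pi(h_2^{-1})$, I get
\[
\chi_{\pi\otimes\pi^*}^G(1,g)=\frac{1}{\#(G\times G)}\sum_{h_2\in G}\chi_\pi(gh_2)\chi_\pi(h_2^{-1})=\frac{1}{(\#G)^2}\sum_{h_2\in G}\chi_\pi(gh_2)\chi_\pi(h_2^{-1}).
\]

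Next I would identify the inner sum with the convolution $\chi_\pi\ast\chi_\pi$ evaluated at $g$: substituting $u=h_2^{-1}$ (or $u=gh_2$) and comparing with the formula $(f_1\ast f_2)(h)=\sum_{u\in G}f_1(u)f_2(u^{-1}h)$, one sees $\sum_{h_2}\chi_\pi(gh_2)\chi_\pi(h_2^{-1})=(\chi_\pi\ast\chi_\pi)(g)$, using that $\chi_\pi$ is a class function so the ordering of the product inside the argument is immaterial up to the change of variable. Then Lemma \ref{lem:conv} with $\tau=\pi$ gives $\chi_\pi\ast\chi_\pi=\frac{\#G}{\dim\pi}\chi_\pi$, so the whole expression becomes $\frac{1}{(\#G)^2}\cdot\frac{\#G}{\dim\pi}\chi_\pi(g)=\frac{1}{\#G\,\dim\pi}\chi_\pi(g)$, which is exactly the claimed identity.

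The main point requiring care — the only real obstacle — is bookkeeping the conventions: matching the $G\times G$-character of $\pi\otimes\pi^*$ to $\chi_\pi(h_1)\chi_\pi(h_2^{-1})$ correctly (so that the complex conjugate on the $\pi^*$-factor turns into evaluation at the inverse), and checking that the parametrization of $\{(h_1,h_2): (h_1,h_2)\cdot 1=g\}$ by $h_2\in G$ really produces the convolution $\chi_\pi\ast\chi_\pi$ rather than $\chi_\pi\ast\chi_{\pi^*}$ or some twisted variant. Once the change of variables is pinned down and one invokes that $\chi_\pi$ is a class function (hence $\chi_\pi(gh_2)=\chi_\pi(h_2g)$), the rest is a one-line application of Lemma \ref{lem:conv}.
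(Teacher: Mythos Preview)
Your proof is correct and follows the same approach as the paper: unwind the definition \eqref{eq:sphchar} for the two-sided $G\times G$-action, parametrize the solutions of $h_1h_2^{-1}=g$, recognize the resulting sum as $(\chi_\pi\ast\chi_\pi)(g)$, and apply Lemma \ref{lem:conv}. Your bookkeeping is in fact cleaner than the paper's written version: you correctly record the normalizing factor as $\#(G\times G)=(\#G)^2$ and track $\chi_\pi(h_2^{-1})$ rather than $\chi_\pi(h_1^{-1})$, whereas the paper's one-line display contains minor typos at exactly these points.
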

\begin{proof}
$$ \chi_{\pi\otimes \pi^*}^G(1,g)=\frac{1}{\#G} \sum_{h_{1},h_{2} \text{ : } h_1 h_2^{-1}=g}\chi_\pi(h_1) \chi_\pi(h_1^{-1}) =\frac{(\chi_\pi* \chi_\pi)(g)}{\#G} =\frac{1}{\#G \dim\pi}\chi_\pi(g),$$ where the last equality is by Lemma \ref{lem:conv}
\end{proof}
\begin{proof}[Proof of Theorem \ref{thm:frob}]
the case $k=1$ follows from the Lemma \ref{lem:key} and lemma \ref{lem:sp.char}. The general case follows by taking  convolution power of the case $k=1$ and using  Lemma \ref{lem:conv}.
\end{proof}
\section{The spherical character}\label{sec:sp.char}
The relative counterpart of the notion of the  character of a representation is given in the following definition:
\begin{definition} \label{def:sph.char}
Let $G$ be a finite group {acting} on a finite set $X$. Let $\pi$ be a representation of $G$.
\begin{enumerate}
\item Let $\phi:\pi \to \C[X]$ and $\psi:\pi^* \to \C[X]$ be morphisms of representations. %(or more generally linear maps).
Denote by $\phi^t$ and $\psi^t$ the dual maps.
We define the spherical character {$\chi_\pi^{\phi\otimes\psi}\in \C[X \times X]$} by $$\chi_\pi^{\phi\otimes \psi}(x,y)=\langle\phi^t(\delta_x),\psi^t(\delta_y) \rangle,$$
 where $\delta_x\in \C[X]=\C[X]^*$ is the Kronecker  delta function supported at $x$.
%\Yoav{This definition seems to not depend on $\pi$}
\item This definition extends (by linearity) to the case when $\phi\otimes \psi$ is replaced by any element of $Hom(\pi,\C[X]) \otimes  Hom(\pi^*,\C[X])=End(Hom(\pi,\C[X]))$.
%\item Define $$\chi_\pi^X:=\chi_\pi^{Id_{Hom(\pi,\C[X])}}.$$
\end{enumerate}
\end{definition}
 \begin{lemma}\label{lem:sp.char.int}
%\begin{equation}\label{eq:sphchar} \chi_\pi^X(x,y)=\frac{1}{\#G}\sum_{h\text{ s.t. } hx=y}\chi_\pi(h)\end{equation}
$$\chi_\pi^X:=\chi_\pi^{Id_{Hom(\pi,\C[X])}}.$$
 \end{lemma}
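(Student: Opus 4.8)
\textbf{Proof plan for Lemma \ref{lem:sp.char.int}.}

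The plan is to unwind both sides of the claimed identity $\chi_\pi^X=\chi_\pi^{\Id_{\Hom(\pi,\C[X])}}$ by writing the identity endomorphism of $V:=\Hom_G(\pi,\C[X])$ as a sum over a basis, and then matching the resulting expression with the defining formula \eqref{eq:sphchar} for $\chi_\pi^X$. First I would fix a basis $\phi_1,\dots,\phi_r$ of $V$ together with the dual basis $\phi_1^\vee,\dots,\phi_r^\vee$ of $V^*$; under the identification $V^*\cong \Hom_G(\pi^*,\C[X])$ (coming from the non-degenerate pairing on $\C[X]$ together with $\pi^{**}\cong\pi$), each $\phi_j^\vee$ corresponds to some $\psi_j:\pi^*\to\C[X]$. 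Then $\Id_V=\sum_{j=1}^r \phi_j\otimes\psi_j$ as an element of $V\otimes V^*=\End(V)$, so by the linearity clause (part (2) of Definition \ref{def:sph.char}) we get $\chi_\pi^{\Id_V}=\sum_{j=1}^r\chi_\pi^{\phi_j\otimes\psi_j}$, where each summand is evaluated at $(x,y)$ as $\langle \phi_j^t(\delta_x),\psi_j^t(\delta_y)\rangle$.

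The main computation is then to identify $\sum_j \langle \phi_j^t(\delta_x),\psi_j^t(\delta_y)\rangle$ with $\frac{1}{\#G}\sum_{h:\,hx=y}\chi_\pi(h)$. The key observation is that $\sum_j \phi_j^t(\delta_x)\otimes \psi_j^t(\delta_y)$, viewed inside $\pi^*\otimes\pi^{**}\cong\End(\pi)$ (or rather the relevant completion/identification), is precisely the composite $\pi \xrightarrow{\text{eval at }\delta_x} \pi \to \pi$ built from the projection of $\C[X]$ onto its $\pi$-isotypic part; more concretely, the operator $P_{x,y}:=\sum_j \phi_j^t(\delta_x)\otimes\psi_j^t(\delta_y)\in\End(\pi)$ is the "matrix coefficient transfer" operator, and its trace is what appears. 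So the plan is: (i) show that $\sum_j\langle\phi_j^t(\delta_x),\psi_j^t(\delta_y)\rangle=\tr(P_{x,y})$ where $P_{x,y}$ does not depend on the choice of basis; (ii) identify $P_{x,y}$ with $\frac{1}{\#G}\sum_{h:\,hx=y}\pi(h)$ acting on $\pi$, using that the orthogonal projection $\C[X]\twoheadrightarrow \C[X]_\pi$ followed by the embedding is the averaging operator $\frac{1}{\#G}\sum_{g\in G}\bar\chi_\pi(g)\,\pi^{\C[X]}(g)$ restricted appropriately, evaluated on $\delta_x$ and paired with $\delta_y$; (iii) take the trace, which turns $\pi(h)$ into $\chi_\pi(h)$ and produces exactly \eqref{eq:sphchar}.

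The step I expect to be the main obstacle is (ii): carefully setting up the identifications $\Hom_G(\pi^*,\C[X])\cong\Hom_G(\pi,\C[X])^*$ and the dualities $\C[X]\cong\C[X]^*$, $\pi^{**}\cong\pi$ so that all the maps $\phi^t,\psi^t$ land in the spaces one expects, and verifying that the composite $\sum_j \phi_j^t(\delta_x)\otimes\psi_j^t(\delta_y)$ really is the $\pi$-component of the transfer operator $\delta_x\mapsto\delta_y$ independent of basis — this is essentially a Schur-orthogonality / reproducing-kernel bookkeeping argument. Once the identification with the averaging operator is in place, taking traces is immediate and yields the claim. Alternatively, one could avoid bases entirely by checking the identity after pairing both sides against an arbitrary $\xi\in\End(V)^*$ and using Lemma \ref{lem:key} to control $\chi_\pi^X$ summed against $\dim\Hom_G(\pi,\C[X])$, but the basis argument above seems the most transparent route.
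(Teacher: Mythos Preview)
Your plan is correct and follows essentially the same route as the paper. The paper packages your basis computation invariantly via the evaluation maps $L_\pi^x:\Hom_G(\pi,\C[X])\to\pi^*$, $\phi\mapsto(u\mapsto\phi(u)(x))$ (which is exactly your $\phi\mapsto\phi^t(\delta_x)$), so that your sum $\sum_j\langle\phi_j^t(\delta_x),\psi_j^t(\delta_y)\rangle$ becomes $\tr\big((L_{\pi^*}^y)^t L_\pi^x\big)=\tr\big(L_\pi^x(L_{\pi^*}^y)^t\big)$ on $\pi^*$; the paper then computes $(L_{\pi^*}^y)^t$ directly and finds $L_\pi^x(L_{\pi^*}^y)^t=\frac{1}{\#G}\sum_{h:hy=x}\pi^*(h)$, giving the claim after taking traces. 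Your step (ii) is this same identification, though the paper's direct computation of the transpose is cleaner than going through the isotypic projector formula you mention (which would introduce an extra $\dim\pi$ that one then has to cancel).
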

\begin{proof}
For $x\in X$, let $L_\pi^x:\Hom_G (\pi,\C[X]) \to \pi^*$ be the linear map defined by
$$\phi\in\Hom_G (\pi,\C[X])\mapsto \left(u\in\pi\mapsto \phi(u)(x)\right).$$
Note that $\Hom_G (\pi,\C[X])$, $\Hom_G(\pi^*,\C[X])$ are naturally dual to each other by the pairing
$$\langle \phi,\psi \rangle := \sum_{x\in X} \langle L_\pi^x \phi, L_{\pi^*}^x \psi \rangle
\qquad \left(\phi\in\Hom_G (\pi,\C[X]);\ \psi\in\Hom_G(\pi^*,\C[X])\right)$$
therefore we shall identify $\Hom_G (\pi^*,\C[X])$ with $\Hom_G(\pi,\C[X])^*$.

Let $\phi\in\Hom_G (\pi,\C[X])$, $\psi\in\Hom_G(\pi^*,\C[X])$. Then by definition,
$$\chi_\pi^{\phi\otimes \psi}(x,y)=\langle\phi^t(\delta_x),\psi^t(\delta_y) \rangle=\langle L_\pi^x \phi,L_{\pi^*}^y \psi \rangle=\langle\left(L_{\pi^*}^y\right)^t L_\pi^x \phi, \psi \rangle,$$
{so $\chi_\pi^{Id_{Hom(\pi,\C[X])}}(x,y)=\tr\left(\left(L_{\pi^*}^y\right)^t L_\pi^x\right)$.}

It is easy to see that $\left(L_{\pi}^x\right)^t:\pi\to\Hom_G(\pi^*,\C[X])$ can be computed by
$$\forall u\in\pi,f\in\pi^*:
\left(\left(L_{\pi}^x\right)^t u\right)(f)=\frac{1}{\#G}\sum_{h\in G} f\left(\pi(h)u\right)\delta_{hx}.
$$
%First, its range is in $\Hom_G(\pi^*,\C[X])$: Let $u\in\pi, f\in\pi^*, g\in G$. Then
%\begin{multline*}
%\left(\left(L_{\pi}^x\right)^t u\right)(\pi^* (g)f)=\frac{1}{\#G}\sum_{h\in %G} f\left(\pi(g^{-1}h)u\right)\delta_{hx}= \\
%=\frac{1}{\#G}\sum_{h\in G} f\left(\pi(h)u\right)\delta_{ghx}=g\left(\left(L_{\pi}^x\right)^t %u\right)(f)
%\end{multline*}
%In addition, it is adjoint to $L_\pi ^x$. Indeed, let $u\in\pi, \phi\in\Hom_G(\pi,\C[X])$, then
%$$
%\langle \phi,\left(L_{\pi}^x\right)^t u \rangle = \sum_{z\in X}\left\langle %L_\pi^z \phi,\frac{1}{\#G}\sum_{h\in G}\delta_{hx,z}\pi(h)u\right\rangle=\frac{1}{\#G}\sum_{h\in %G}\left\langle L_\pi^{hx}\phi,\pi(h) u\right\rangle=\langle L_\pi^x \phi,u\rangle.
%$$

Now, {$\chi_\pi^{Id_{Hom(\pi,\C[X])}}=\tr\left(\left(L_{\pi^*}^y\right)^t L_\pi^x\right)=\tr\left(L_\pi^x\left(L_{\pi^*}^y\right)^t \right)$.} Note that $L_\pi^x\left(L_{\pi^*}^y\right)^t$ is the linear mapping $\pi^*\to\pi^*$ defined by
$$
\forall f\in\pi^*: \left(L_\pi^x\left(L_{\pi^*}^y\right)^t\right)f=\left(u\in\pi\mapsto
\frac{1}{\#G}\sum_{h\in G} \langle u,\left(\pi^*(h)f\right)\rangle\delta_{hy,x}\right)=\frac{1}{\#G}\sum_{h\text{ s.t. }hy=x}\pi^*(h)f
$$
{so $$\chi_\pi^{Id_{Hom(\pi,\C[X])}}=\tr\left(L_\pi^x\left(L_{\pi^*}^y\right)^t \right)=\frac{1}{\#G}\sum_{h\text{ s.t. }hy=x}\chi_{\pi^*}(h)=\frac{1}{\#G}\sum_{h\text{ s.t. }hx=y}\chi_{\pi}(h)=\chi_\pi ^X(x,y)$$}

\end{proof}

We reformulate Theorem \ref{thm:main} in terms of the spherical character:

\begin{theorem}\label{thm:main.sph}
Let $G$ be a finite group that acts on a  finite set $X$. Then:
\begin{multline*}
\sum_{\pi \in \irr G}\frac{\dim(\Hom_G(\pi, \C[X]))^m}{\dim \pi^{m+2k-1}} \chi^{X}_\pi(x_{1},x_{2})= \frac{1}{\#G^{m+2k}} \cdot\\\cdot \#\{p_1,\dots p_m \in X,h_1,\dots h_m,a_1,\dots a_k,b_1,\dots b_k \in G|h_i\in G_{p_i},\prod_{i=1}^m h_i \cdot \prod_{i=1}^k [a_i, b_i]\cdot x_1=x_2\}.
\end{multline*}
%Here $ \chi^{X}_\pi\in \C[X \times X]$ is the spherical character  of a representation $\pi$ w.r.t. the space $X$ as   defined in  \S \ref{sec:sp.char} below.
\end{theorem}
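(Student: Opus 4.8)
The plan is to derive Theorem~\ref{thm:main.sph} from Theorem~\ref{thm:main} by unwinding the relationship between the character $\chi_\pi(g)$ appearing in Theorem~\ref{thm:main} and the spherical character $\chi_\pi^X(x_1,x_2)$, using the integration identity of Lemma~\ref{lem:sp.char.int} together with the defining formula \eqref{eq:sphchar}. Concretely, the right-hand side of Theorem~\ref{thm:main} is a sum over tuples with a fixed value $g$ of the product $\prod h_i \cdot \prod [a_i,b_i]$; the right-hand side of Theorem~\ref{thm:main.sph} is instead a sum over the same data together with the condition that this product \emph{moves $x_1$ to $x_2$} in the $G$-action on $X$, i.e.\ $\prod h_i \cdot \prod[a_i,b_i]\cdot x_1 = x_2$. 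So the two differ exactly by replacing ``count solutions with group element $=g$'' by ``count solutions with group element sending $x_1\mapsto x_2$'', and summing the former against $\frac{1}{\#G}\chi_\pi(g)$ over the relevant $g$ is precisely the operation \eqref{eq:sphchar} that turns $\chi_\pi$ into $\chi_\pi^X$.

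The key steps, in order, would be: (1)~Write Theorem~\ref{thm:main} with the group element on the right-hand side treated as a free variable $g$ rather than fixed; that is, for each $g\in G$ we have
\[
\sum_{\pi\in\irr G}\frac{\dim(\Hom_G(\pi,\C[X]))^m}{\dim\pi^{m+2k-1}}\chi_\pi(g) = \frac{1}{\#G^{m+2k-1}}\,N(g),
\]
where $N(g)$ is the number of tuples $(p_i,h_i,a_i,b_i)$ with $h_i\in G_{p_i}$ and $\prod h_i\cdot\prod[a_i,b_i]=g$. (2)~Multiply both sides by $\frac{1}{\#G}$, apply the operator that sends a class function $f$ on $G$ to the function on $X\times X$ given by $(x_1,x_2)\mapsto \frac{1}{\#G}\sum_{h:\,hx_1=x_2} f(h)$ — this is exactly \eqref{eq:sphchar} applied to the relevant representation on the left and, by Lemma~\ref{lem:sp.char.int}, produces $\chi_\pi^X(x_1,x_2)$ on the left-hand side (note $\chi_\pi^X = \chi_\pi^{Id_{\Hom(\pi,\C[X])}}$, so there is no extra $\dim$ factor to worry about beyond bookkeeping). (3)~On the right-hand side, summing $\frac{1}{\#G}\cdot\frac{1}{\#G}N(g)$ over all $h\in G$ with $hx_1=x_2$ and relabeling $g=h$ gives $\frac{1}{\#G^{m+2k+1}}$ times the number of tuples $(p_i,h_i,a_i,b_i)$ together with the extra datum that the product $\prod h_i\cdot\prod[a_i,b_i]$ lies in $\{h: hx_1=x_2\}$, which is exactly the count in Theorem~\ref{thm:main.sph} up to reconciling the power of $\#G$. (4)~Carefully check the exponent: Theorem~\ref{thm:main} has $\#G^{m+2k-1}$, and applying the averaging operator \eqref{eq:sphchar} once contributes a factor $\frac{1}{\#G}$, so we should land on $\#G^{m+2k}$ as stated — tracking this constant is the one genuinely error-prone bookkeeping point.

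The main obstacle I expect is not conceptual but precisely this constant-tracking and the correct identification of the left-hand side: one must be sure that applying \eqref{eq:sphchar} to the $G$-representation $\C[X]^{\oplus \dim\Hom_G(\pi,\C[X])}$-component (equivalently, to the multiplicity space paired with $\pi$) yields $\dim(\Hom_G(\pi,\C[X]))^m$ in the numerator exactly as written, rather than some shifted power, and that Lemma~\ref{lem:sp.char.int}'s normalization $\chi_\pi^X = \chi_\pi^{Id}$ matches. A clean way to avoid ambiguity is to apply \eqref{eq:sphchar} directly to the scalar identity in step~(1) viewed as an identity of class functions of $g$: the operator is linear, it commutes with the sum over $\pi$, and on the single term $\chi_\pi$ it produces $\#G\cdot\dim\pi\cdot\chi_\pi^X$ by Lemma~\ref{lem:sp.char}-type reasoning — wait, more carefully, by \eqref{eq:sphchar} it produces exactly $\#G\cdot(\text{something})$; I would pin this down by noting \eqref{eq:sphchar} with $\pi$ replaced by the full isotypic piece, or simply by citing Lemma~\ref{lem:sp.char.int} with $X$ and reading off the normalization. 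Once the normalization is fixed, the rest is a direct substitution and the theorem follows. I would present the argument in two or three lines in the main text, with the constant verification spelled out.
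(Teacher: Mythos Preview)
Your approach is correct and is exactly the paper's: Theorem~\ref{thm:main.sph} follows from Theorem~\ref{thm:main} by applying the averaging operator $f\mapsto \frac{1}{\#G}\sum_{h:\,hx_1=x_2}f(h)$ of \eqref{eq:sphchar} to both sides, which sends $\chi_\pi(g)$ to $\chi_\pi^X(x_1,x_2)$ on the left and contributes one extra factor of $\#G^{-1}$ on the right, giving $\#G^{m+2k}$. Your hesitation in the last paragraph is unnecessary---you do not need Lemma~\ref{lem:sp.char} (which concerns the $G\times G$ action on $G$) at all, only the defining formula \eqref{eq:sphchar}; and the spurious second $\tfrac{1}{\#G}$ in your step~(3) (leading to the incorrect exponent $m+2k+1$) should simply be deleted, as you already realized in step~(4).
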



\begin{thebibliography}{KKMS73}

\bibitem[AA]{AA} Aizenbud, A.; Avni, N.; {\em Representation growth and rational singularities of the moduli space of local systems}. \href{http://arxiv.org/abs/1307.0371}{Arxiv 1307.0371}, to appear in Inventiones Mathematicae.
\bibitem[FG06]{FG}  V. Fock, A. Goncharov, {\it Moduli spaces of local systems and higher
Teichmuller theory.}
Publications Mathematiques de l'Institut des Hautes Etudes Scientifiques
103/1, (2006).

\bibitem[HR08]{HR}
T. Hausel, F. Rodriguez-Villegas,
{\it Mixed Hodge polynomials of character varieties.}
 Invent. Math. 174/3, (2008).
\bibitem[Jam84]{jam} G.D. James, {\it Representations of general linear groups,} London Math. Soc. Lec. Notes Series, 94 (1984).
\bibitem[Kat08]{Kat}
N. Katz, {\it
Appendix of \cite{HR}: E-polynomials, zeta-equivalence, and polynomial-count varieties.} Invent. Math. 174/3, (2008).
\bibitem[Wit91]{Wi} Witten, Edward \emph{On quantum gauge theories in two dimensions.} Comm. Math. Phys. 141 (1991), no. 1, 153--209.

\end{thebibliography}
\end{document}